\newtheorem{theorem}{Theorem}[section]
\newtheorem{corollary}[theorem]{Corollary}
\newtheorem{proposition}[theorem]{Proposition}
\newtheorem{definition}[theorem]{Definition}
\newtheorem{remark}{Remark}
\newcommand{\fref}[1]{Figure~\ref{#1}}
\newcommand{\Fref}[1]{Figure~\ref{#1}}
\newcommand{\frefs}[1]{Figures~\ref{#1}}
\newcommand{\Frefs}[1]{Figures~\ref{#1}}
\newcommand{\sref}[1]{Section~\ref{#1}}
\newcommand{\atan}[1]{\tan^{-1}{\! #1}}
\newcommand{\billiard}[1][ABCD]{{\setlength{\fboxsep}{0.5mm} \setlength{\fboxrule}{0.4mm}{\color{blue} \framebox[12.5mm]{\color{black}{\sf #1}}}}}
\newcommand{\titlebilliard}{{\setlength{\fboxsep}{0.5mm} \setlength{\fboxrule}{0.4mm}{\color{blue} \framebox[10.5mm]{\color{black}{\sf ABCD}}}}}
\newenvironment{proof}{{\textbf{Proof.}}}{\mbox{} \\ \hspace*{\fill} $\square$ \\[5mm]}
\begin{document}

\title{Periodic orbits for square and rectangular billiards}

\author{Hongjia H.\ Chen and Hinke M.\ Osinga \\
\small Department of Mathematics, University of Auckland, Auckland, New Zealand}

\date{October 2024}

\maketitle

\begin{abstract} 
Mathematical billiards is much like the real game: a point mass, representing the ball, rolls in a straight line on a (perfectly  friction-less) table, striking the sides according to the law of reflection. A billiard trajectory is then completely characterised by the number of elastic collisions.  The rules of mathematical billiards may be simple, but the possible behaviours of billiard trajectories are endless. In fact, several fundamental theory questions in mathematics can be recast as billiards problems. A billiard trajectory is called a periodic orbit if the number of distinct collisions in the trajectory is finite. We classify all possible periodic orbits on square and rectangular tables. We show that periodic orbits on such billiard tables cannot have an odd number of distinct collisions. We also present a connection between the number of different classes of periodic orbits and Euler's totient function, which for any integer $N$ counts how many integers smaller than $N$ share no common divisor with $N$ other than $1$. We explore how to construct periodic orbits with a prescribed (even) number of distinct collisions, and investigate properties of inadmissible (singular) trajectories, which are trajectories that eventually terminate at a vertex (a table corner).
\end{abstract}

\begin{quote}
  \emph{Keywords:} periodic orbit; mathematical billiard; Euler's totient function.
\end{quote}

\section{Introduction}
Mathematical billiards shares many similarities to the game of billiards in reality. Fundamentally, both comprise a billiard ball moving in a straight line until it strikes a side of the table. Mathematical billiards ignores friction, takes the billiard ball as a point mass and assumes purely elastic collisions. As a consequence, the billiard trajectory will satisfy the law of reflection (\emph{the angle of incidence equals the angle of reflection}) when it collides with a boundary. Therefore, any billiard trajectory is uniquely determined by its initial position and direction of motion. A real billiard table is almost always rectangular, but mathematical billiard tables can have arbitrary shapes and dimensions. First posed as Alhazen's problem in optics by Ptolemy in 150 AD~\cite{baker1881}, mathematical billiards is much more than a fun and interesting game; it turns out that the mere shape of the table distinguishes mathematical billiards into three different classes---elliptic, hyperbolic, and parabolic---which are well known as classes in different fields of mathematics, not the least in dynamical systems theory~\cite{katok2009, sinai1991}. Mathematical billiards has extensively been studied from the perspective of ergodic theory in dynamical systems, as well as algebraic geometry (moduli spaces) and Teichm\"{u}ller theory; for example, see the textbooks by Tabachnikov~\cite{sergebook}, Chernov and Markarian~\cite{chernovbook}, or Rozikov~\cite{rozikovbook}. Nevertheless, a myriad of unsolved open problems in mathematical billiards remain~\cite{OpenProblems2022, dettmann2011, gutkin2012, KaloshinSorrentino2022}. Indeed, there are many fundamental theory questions in mathematics that can be recast as a billiard problem~\cite{sinai2000, sinai2004}, as well as a plethora of applications, including but not limited to the computation of $\pi$~\cite{galperin2003}, mechanics~\cite{Arnold1989}, quantum computing~\cite{quantum1982}, pouring problems~\cite{movies2012}, Benford's Law~\cite{sergebook}, diffusion in the Lorentz Gas~\cite{dettmann2014} and the Riemann hypothesis~\cite{dettmann2005}.

For the class of planar, polygonal tables, that is, a flat shape bounded by a piecewise-linear closed curve, any billiard trajectory exhibits one of only three possible behaviours: 
\begin{enumerate}
\item it is a \emph{singular orbit}---after a finite number of collisions, the billiard trajectory terminates at one of the vertices of the table;
\item it is a \emph{periodic orbit}---after a finite number of collisions, the billiard trajectory retraces itself;
\item it is a \emph{non-periodic orbit}---the billiard trajectory continues indefinitely, without ever repeating. 
\end{enumerate}
Obviously, singular orbits only exist on tables with vertices and they are often ignored or classified as non-periodic~\cite{singular_traj_ref}. If a billiard trajectory is periodic, its period is given by the total number of unique collisions. A natural question to ask is whether periodic orbits always exist or whether this depends on the shape of the billiard table. Furthermore, what periods are possible? In fact, the very question whether periodic orbits exist on any polygon is still an open problem! It is listed as Problem 3 in Katok's five most resistant problems in dynamics~\cite{katok}. Even the case of a triangular billiard table is considered impenetrable, although the Triangular Billiards Conjecture, which states that every triangular table has a periodic billiard path, is widely believed to be true~\cite{schwartz2009}. 

The question about which periods are possible has been treated for specific polygonal billiard tables. In particular, Baxter and Umble~\cite{baxter} classified all possible periodic orbits for an equilateral triangle. In this paper, we consider square or rectangular billiard tables and we similarly give a complete classification of which periods and types of periodic orbits are possible for a square or rectangle. The square is perhaps the simplest piecewise-smooth billiard table, which means that it is introduced early on in billiard textbooks, typically alongside the smooth, circular billiard; for example, see~\cite{chernovbook, rozikovbook, sergebook}. As a consequence, results for the square billiard tend to be skimmed over, despite its behaviour being distinctly different from circular and other smooth, convex tables. Existence of periodic orbits on the square and rectangular billiard is easily verified by explicit examples. \Fref{fig:square_examples} shows three examples of periodic orbits on the square. Here, the square table is represented by the (blue) boundary with vertices {\sf A}, {\sf B}, {\sf C} and {\sf D}, the periodic orbits are indicated as directed (black) lines that collide with the boundary at points identified by thick (red) dots. By counting these points of collision, we see that the periods are two, four and six in the respective panels~(a), (b) and~(c). Note that the period-two orbit in panel~(a) is readily produced on a rectangle as well; for the period-four and -six orbits this is less obvious.
%
\begin{figure}[t!]
  \centering
  \includegraphics{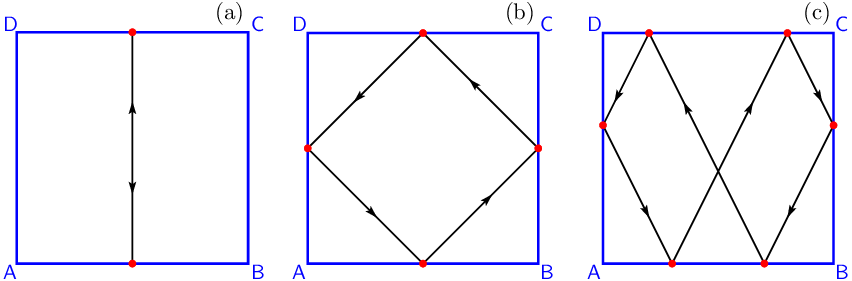}
  \caption{\label{fig:square_examples}
    Examples of periodic orbits on the square billiard table; the orbits in panels~(a), (b) and~(c) have periods two, four and six, respectively.}
\end{figure}

Interestingly, none of the textbooks show that the square and rectangular billiard tables do not admit periodic orbits with odd periods. This can be proven using cutting sequences, as done by Davis~\cite{davissquare}, but we provide a different proof that harmonises well with the periodicity classification presented in this paper. Furthermore, we also discuss how to construct periodic orbits with prespecified periods and connect our classification of periodic orbits with Euler's totient function from number theory~\cite{totient}. 

This paper is organised as follows. In the next section, we define the different classes of periodic orbits and clarify what we mean when two periodic orbits with the same period are different. Here, we also explain the powerful technique of unfolding~\cite{FoxKershner1936, KatokZemlyakov1975} that is used in the classical proof of the existence of periodic orbits on the square or rectangular billiard table. We present alternative and intuitive proofs for periodicity and non-existence of odd periodic orbits in~\sref{sec:square}. In \sref{sec:classify}, we count and fully classify all possible periodic orbits. We treat singular orbits in \sref{sec:gendiag}, where we show how different types of singular orbits form boundaries between different families of periodic orbits. In \sref{sec:rectangle}, we extend the results for the square to the rectangular billiard table. We conclude in \sref{sec:conclusion} with a discussion of future work.

\section{Setting and definitions}
\label{sec:setting}
The three periodic orbits shown in \fref{fig:square_examples} for the square billiard table are all different, because their periods are not the same. On the other hand, the period-two orbit in \fref{fig:square_examples}(a), for example, can be shifted to the right or left without changing its period; similarly, the period-six orbit in \fref{fig:square_examples}(c) can be flipped upside down without changing the period. In this section, we define the \emph{family} or \emph{class} of periodic orbits that we consider to be equivalent, and we will then proceed to count the number of different classes of periodic orbits.

We begin by defining a coordinate system that identifies a quadrilateral billiard table, denoted $\billiard$, with four (ordered) vertices {\sf A}, {\sf B}, {\sf C} and {\sf D}. Note that the billiard table can (uniformly) be scaled without affecting the number of distinct collisions for a periodic orbit; this means that we may assume that side {\sf AB} has unit length. Moreover, we consider this side as the base and position the table such that {\sf AB} is equal to the unit interval $\mbox{\sf AB} := \left\{ (x, 0) \in \mathbb{R}^2 \bigm| 0 \leq x \leq 1 \right\}$ on the $x$-axis, and the vertex {\sf A} lies at the origin. Recall that any billiard trajectory is uniquely determined by an initial position and associated direction of motion. We assume that the initial position is a point of collision with the side {\sf AB}; if necessary, we rotate the table and relabel the vertices so that such a point exists. Then we can identify any billiard trajectory by the initial point of collision on the side {\sf AB} at distance $P_0 \in [0, 1)$ from {\sf A}, and an initial angle $\alpha_{0} \in (0, \frac{\pi}{2}]$ between the corresponding outgoing (or equivalently incoming) line and the side {\sf AB}; we will say that the pair $\langle P_0,  \alpha_0 \rangle$ generates the billiard trajectory. Note that we restrict the angle $\alpha_0$ to be at most $\frac{\pi}{2}$ radians, which means that we do not specify the direction of motion. Since we are primarily interested in periodic orbits, and how many different ones there are, we do not care whether the sequence of collision points is given in reversed order. We will slightly abuse notation and refer to $P_0$ as both the initial point on {\sf AB} and its coordinate on the $x$-axis.

We only consider square or rectangular billiard tables; hence, the adjacent sides are perpendicular, which has an important consequence:
\begin{proposition}
\label{prop:angles}
Consider a billiard trajectory on the square or rectangular billiard $\billiard$ generated by the pair $\langle P_0,  \alpha_0 \rangle$ with $P_0 \in [0, 1)$ and $\alpha_0 \in (0, \frac{\pi}{2}]$. Then all collisions with sides {\sf AB} and {\sf CD} will be at angle $\alpha_0$ and those with sides {\sf BC} and {\sf DA} will be at angle $\frac{\pi}{2} - \alpha_0$.
\end{proposition}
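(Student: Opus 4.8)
The plan is to track how the angle of the billiard trajectory transforms under successive reflections, using the fact that all four sides of the table are either horizontal or vertical. Because the table is a square or rectangle, the sides \textsf{AB} and \textsf{CD} are horizontal (parallel to the $x$-axis), while \textsf{BC} and \textsf{DA} are vertical. The law of reflection states that the angle of incidence equals the angle of reflection, measured relative to the side being struck. My key observation is that reflecting off a horizontal side preserves the acute angle the trajectory makes with the horizontal, and reflecting off a vertical side sends a trajectory making angle $\alpha$ with the horizontal to one making angle $\frac{\pi}{2}-\alpha$ with the horizontal. I would make this precise by noting that the direction of motion has some slope, and reflection off a horizontal wall negates the vertical component of velocity while reflection off a vertical wall negates the horizontal component; in either case the \emph{set} of acute angles the line makes with the two coordinate axes, namely $\{\alpha, \frac{\pi}{2}-\alpha\}$, is preserved as an unordered pair.

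The main structural step is then an induction (or an invariance argument) on the sequence of collisions. I would start from the generating data $\langle P_0, \alpha_0 \rangle$: by construction the initial collision is on \textsf{AB}, a horizontal side, and the angle there is $\alpha_0$ by definition. I would then argue that the collision angle with any horizontal side (\textsf{AB} or \textsf{CD}) is invariantly $\alpha_0$, because between a collision with a horizontal side and the next collision with a horizontal side, the trajectory can only strike vertical sides in between, and each such reflection leaves the angle-with-the-horizontal unchanged (a vertical wall reflects the horizontal velocity component but not the vertical one, so the angle measured from the horizontal is preserved). Symmetrically, the collision angle with any vertical side (\textsf{BC} or \textsf{DA}) must be the complementary angle $\frac{\pi}{2}-\alpha_0$, since the angle a line makes with the vertical is the complement of the angle it makes with the horizontal.

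The cleanest formulation of this argument is to observe that the quantity preserved under all reflections is the unordered pair of acute angles the trajectory makes with the two axis directions; reflection off either type of wall permutes nothing essential because the wall normals are aligned with the axes. The angle measured \emph{against a horizontal side} is always the ``horizontal'' member of this pair, and the angle measured \emph{against a vertical side} is always the ``vertical'' member, which are $\alpha_0$ and $\frac{\pi}{2}-\alpha_0$ respectively. I would phrase the induction so that the induction hypothesis is precisely the claim of the proposition for all collisions up to the $n$-th, and the inductive step handles the two cases (next wall horizontal, next wall vertical) by the velocity-component reasoning above.

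I expect the main obstacle to be purely expository rather than mathematical: one must be careful to fix a consistent convention for the angle (always taking the acute angle between the trajectory line and the relevant side, which lies in $(0, \frac{\pi}{2}]$) so that the complementarity $\alpha \mapsto \frac{\pi}{2}-\alpha$ is literally correct and does not require case analysis on the direction of travel. Handling the boundary value $\alpha_0 = \frac{\pi}{2}$ (a vertical trajectory, which strikes only horizontal sides and makes angle $0$ with the vertical) and the degenerate case $\alpha_0$ such that the orbit hits a vertex are the only edge cases worth flagging, but since the proposition only asserts the reflection angles (and vertex-terminating singular orbits are treated separately later), these do not affect the core argument. Making the ``unordered pair of axis-angles is invariant'' statement rigorous with a short velocity-vector computation is the one technical point I would want to write out carefully.
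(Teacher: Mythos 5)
Your proposal is correct and follows essentially the same route as the paper: an induction over successive collisions in which the angle is preserved when the trajectory passes between parallel (horizontal) sides and complemented when it transfers to a perpendicular (vertical) side. The only difference is in how the local step is justified---you negate a velocity component, whereas the paper uses the angle sum of the right triangle formed by the trajectory segment and two adjacent sides (and equality of angles at parallel sides)---but both arguments establish the same invariant pair $\left\{ \alpha_0, \tfrac{\pi}{2} - \alpha_0 \right\}$ and iterate it identically.
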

\begin{proof}
Since $\alpha_0 \in (0, \frac{\pi}{2}]$, the outgoing line from $P_0$ on the side {\sf AB} will either end on the adjacent side {\sf BC} or on the opposite side {\sf CD}. The sides {\sf AB} and {\sf CD} are parallel, because $\billiard$ is either a square or a rectangle. Hence, if the outgoing line from $P_0$ ends on {\sf CD}, is does so at the same angle $\alpha_0$; see \fref{fig:square_examples}(a) and~\ref{fig:square_examples}(c) for examples. If the outgoing line from $P_0$ ends on the adjacent side {\sf BC} then it does so at an angle $\alpha_1$ such that $\alpha_0 + \frac{\pi}{2} + \alpha_1 = \pi$, because the outgoing line forms a right triangle with the sides {\sf AB} and {\sf BC}; see \fref{fig:square_examples}(b) and~\ref{fig:square_examples}(c) for examples. Therefore, we have $\alpha_1 = \frac{\pi}{2} - \alpha_0$ as claimed. The end point of the outgoing line from $P_0$ is another collision point $P_1$, unless it is a vertex, in which case the billiard trajectory terminates and the result holds. By rotating the table and relabelling the vertices, we can consider the billiard trajectory as being generated by the pair $\langle P_1, \alpha_1 \rangle$, with $\alpha_1 = \alpha_0$ or $\alpha_1 = \frac{\pi}{2} - \alpha_0$. Using the same arguments, we find that a collision with the (original) side {\sf DA} also occurs at angle $\frac{\pi}{2} - \alpha_0$. It follows that all collisions with sides {\sf AB} and {\sf CD} occur at angle $\alpha_0$ and all collisions with sides {\sf BC} and {\sf DA} occur at angle $\frac{\pi}{2} - \alpha_0$. 
\end{proof}
We use Proposition~\ref{prop:angles} to classify different billiard trajectories with the same period: we distinguish them by the numbers of different collision points in the cycle that correspond to the two different angles at which these collisions occur. More precisely, we have the following definition of equivalence.
\begin{definition}[Equivalence class $\mathcal{C}_K(p)$ of period-$K$ orbits]
\label{def:POclass}
A period-$K$ orbit for the square (or rectangular) billiard $\billiard$ belongs to the equivalence class $\mathcal{C}_K(p)$, for some integer $p < K$, if a total of $p$ of its $K$ different collision points lie on the (parallel) sides {\sf AB} or {\sf CD}.
\end{definition}
Note that any period-$K$ orbit from the class $\mathcal{C}_K(p)$ has $q = K - p$ different collision points on the (parallel) sides {\sf BC} or {\sf DA}, which we may emphasise by saying the period-$K$ orbit is of type $(p, q)$. Definition~\ref{def:POclass} makes no distinction between periodic orbits that have the exactly same number of collisions on each of the horizontal and vertical sides.

\begin{figure}[t!]
  \centering
  \includegraphics{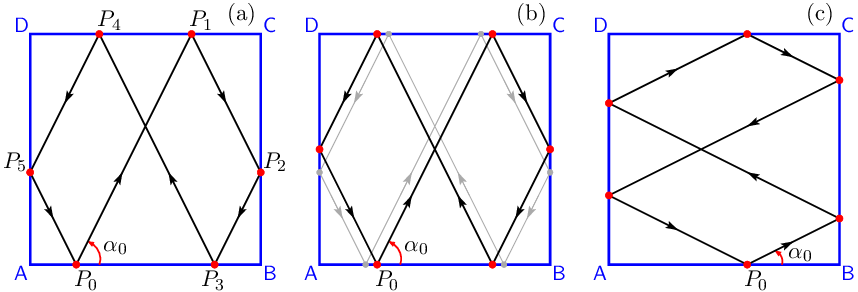}
  \caption{\label{fig:equivalence}
    Examples of period-six orbits on the square billiard, generated by the initial pair $\langle P_0, \alpha_0 \rangle = \langle 0.2,\, \atan{(2)} \rangle$ in panel~(a), the shifted initial pair $\langle P_0, \alpha_0 \rangle = \langle 0.25,\, \atan{(2)} \rangle$ in panel~(b) that is from the same equivalence class, and the pair $\langle P_0, \alpha_0 \rangle = \langle 0.6,\, \frac{\pi}{2} - \atan{(2)} \rangle$ obtained by rotation in panel~(c), which we consider part of a different family of period-six orbits.}
\end{figure}
%
The three different periodic orbits shown in \fref{fig:square_examples} with periods two, four, and six, can be distinguished more precisely as periodic orbits from the three different equivalence classes $\mathcal{C}_2(2)$, $\mathcal{C}_4(2)$ and $\mathcal{C}_6(4)$, respectively. As a more detailed example, compare the period-six orbits in \fref{fig:equivalence} with the possibly equivalent period-six orbit from \fref{fig:square_examples}(c). We identify collision points by their coordinates on the boundary of the unit square, that is, we assume vertices {\sf A} and {\sf B} have coordinates $(0, 0)$ and $(1, 0)$, and by definition, vertices {\sf C} and {\sf D} then have coordinates $(1, 1)$ and $(0, 1)$, respectively. Panel~(a) shows the period-six orbit of \fref{fig:square_examples}(c), but reflected about the line $\{ y = 0.5 \}$. More precisely, this period-six orbit is generated by the pair $\langle P_0, \alpha_0 \rangle = \langle 0.2,\, \atan{(2)} \rangle$, which means that it starts at the point $P_0 = (0.2, 0)$ on the side {\sf AB} along the outgoing line with slope $2$. The sequence of successive collision points is then given by the points $P_1 = (0.7, 1)$, $P_2 = (1, 0.4)$, $P_3 = (0.8, 0)$, $P_4 = (0.3, 1)$, $P_5 = (0, 0.4)$, after which the cycle repeats with $P_0$. Note that all lines are parallel to either the outgoing line at $P_0$, with slope $2$, or the incoming line at $P_0$, with slope $-2$. Hence, the angles at $P_0$ and $P_3$ on the horizontal side {\sf AB} are both $\alpha_0$, as are the angles at $P_1$ and $P_4$ on the other horizontal side {\sf CD}; on the other hand, with respect to the verical sides  {\sf BC} and {\sf DA}, these lines only have slopes $\pm \frac{1}{2}$ so that the angles at $P_2$ and $P_5$ are $\atan{(\frac{1}{2})} = \frac{\pi}{2} - \alpha_0$.  Therefore, this period-six orbit is of type $(4, 2)$ and it is equivalent to the period-six orbit from \fref{fig:square_examples}(c).  If we choose the reversed direction of motion, starting along the line with slope $-2$, we encounter the reversed sequence of collisions $P_5$, $P_4$, $P_3$, $P_2$, $P_1$, and $P_0$, leading again to an equivalent periodic orbit of type $(4, 2)$. Similarly, we may shift the point $P_0$, say, to the point $(0.25, 0)$ as shown in panel~(b), while maintaining the same angle $\alpha_0$; the associated periodic orbit is again of type $(4,2)$. A further shift of $P_0$ to $(0.3, 0)$ leads to the period-six orbit from \fref{fig:square_examples}(c). Indeed, these three periodic orbits are all part of a family of period-six orbits in $C_6(4)$ that is generated by a pair $\langle P_0, \alpha_0 \rangle$, with $P_0$ varying over one or more sub-intervals in $[0, 1]$.

\Fref{fig:equivalence}(c) shows the same period-six orbit from panel~(a), but rotated anti-clockwise by a quarter turn; a mere relabelling of the vertices leads to a periodic orbit generated by the pair $\langle P_5, \atan{(\frac{1}{2})} \rangle = \langle 0.6,\, \atan{(\frac{1}{2})} \rangle$, which is an example from a different equivalence class, namely, from $\mathcal{C}_6(2)$. We make a distinction between these two families of period-six orbits, because it would be natural to do so for rectangular billiard tables. Therefore, in this paper, the periodic orbit of type $(2, 4)$ shown in \fref{fig:equivalence}(c) is not from the same equivalence class as the periodic orbits of type $(4, 2)$ shown in panels~(a) and~(b).

\subsection{Unfolding of billiard trajectories}
\label{sec:square_unfolding}
Following a billiard trajectory on a table $\billiard$ can be difficult if there are many collisions, because of a myriad of intersecting lines. The technique of \emph{unfolding} transforms the billiard trajectory: rather than reflecting collisions with the sides, the entire table reflects, so that the billiard trajectory remains a straight line. For so-called \emph{rational}, planar, polygonal billiard tables, which have polygonal angles that are rational multiples of $\pi$, this approach relates mathematical billards to the theory of geodesic flow on Riemann surfaces~\cite{FoxKershner1936, KatokZemlyakov1975, keane1978}; in special cases of rational billard tables, including the square and rectangle, this unfolding leads to a tiling of the plane and is amenable to illustration. 

\begin{figure}[t!]
  \centering
  \includegraphics{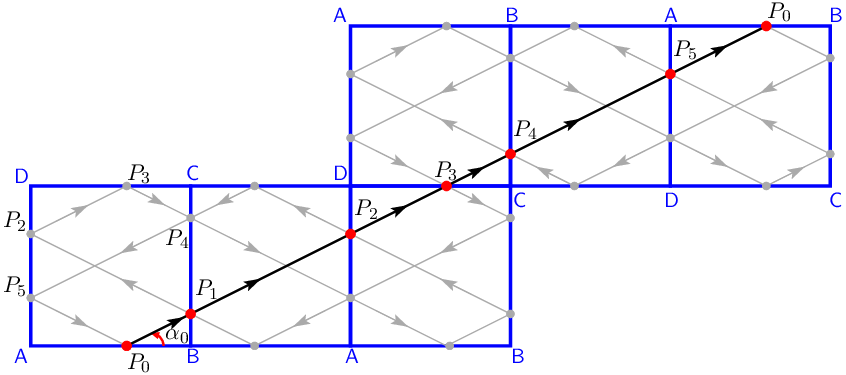}
  \caption{\label{fig:square_unfolding} 
    The unfolded period-six orbit from \fref{fig:equivalence}(c) on the square $\titlebilliard$. The periodic orbit has type $(2, 4)$, which means that the unfolding requires two reflections about a horizontal side and four reflections about a vertical side before the trajectory repeats on a translated copy of tables with the same orientations.}
\end{figure}
%
We show an example in \fref{fig:square_unfolding} with an unfolding of the period-six orbit from \fref{fig:equivalence}(c). Recall that this periodic orbit is generated by the pair $\langle P_0, \alpha_0 \rangle = \langle 0.6,\, \atan{(\frac{1}{2})} \rangle$ on the square $\billiard$ and there are six different collisions, which are the points $P_0 = (0.6, 0)$ on the side {\sf AB}, followed by $P_1 = (1, 0.2)$, $P_2 = (0, 0.7)$, $P_3 = (0.6, 1)$, $P_4 = (1, 0.8)$, and $P_5 = (0, 0.3)$. Here, $P_1$ and $P_4$ lie on the side {\sf BC}, and $P_2$ and $P_5$ lie on the side {\sf DA}, while $P_3$ lies on {\sf CD}. In the unfolding, the trajectory from $P_0$ to $P_1$ continues in a straight line with slope $\frac{1}{2}$ beyond the side {\sf BC}, on the (horizontally) reflected table $\billiard[BADC]$; the next collision occurs on the opposite side, which is {\sf AD} as required, at the same location $P_2$ as would have been reached by following the outgoing line with slope $-\frac{1}{2}$ from $P_1$ in the original table $\billiard$. At $P_2$, the table is again reflected horizontally and the trajectory continues in the original orientation until the collision at $P_3$ on the side {\sf CD}. Next, the trajectory continues with the same slope beyond {\sf CD}, on the vertically reflected table $\billiard[DCBA]$; the subsequent collision points $P_4$ and $P_5$, together with two associated vertical reflections, are oriented up-side-down with respect to the original table $\billiard$. Indeed, the trajectory from $P_4$ to $P_5$ continues at the same angle $\atan{(\frac{1}{2})}$ with the horizontal, because in the unfolding, it lies on the table $\billiard[CDAB]$. The horizontal reflection used to continue past $P_5$ flips the table back to the orientation $\billiard[DCBA]$ that was used to pass from $P_3$ to $P_4$. The next collision with the side {\sf AB} at the top right in \fref{fig:square_unfolding} corresponds to the point $P_0$ on {\sf AB}; the table obtained after a sixth reflection about the side {\sf AB} will be a translated copy of the original table $\billiard$ on the bottom left, and the trajectory will repeat. Hence, the line segment in \fref{fig:square_unfolding} can be continued past $P_0$ in both directions to give a straight line in $\mathbb{R}^2$ that has slope $\frac{1}{2}$ and passes through the point $P_0$ on the $x$-axis; the entire line is the unfolded trajectory in the plane, which in this example represents a period-six orbit of type $(2,4)$.
%
\begin{remark}
\label{rem:unfolding}
The technique of unfolding for the rectangular billiard is done in the exact same way with horizontal and vertical reflections. Since the direction of motion at a point of collision only depends on the angle it makes with that side of the table, trajectories on a rectangular billiard also correspond to straight lines in $\mathbb{R}^2$ determined by the corresponding generator $\langle P_0, \alpha_0 \rangle $.
\end{remark}

\section{Properties of the square billiard}
\label{sec:square}
Notice in \fref{fig:square_unfolding} that the orientation of the table changes each time a collision occurs. More precisely, there are four unique orientations, namely, the original table $\billiard$, the horizontally reflected version $\billiard[BADC]$, the vertically reflected version $\billiard[DCBA]$, and the table $\billiard[CDAB]$ that is both horizontally and vertically reflected with respect to the original table. We can identify these four orientations in terms of the integer coordinates of their vertices in $\mathbb{R}$:
%
\begin{definition}
\label{def:unfolding_orientation}
Consider a square tile in $\mathbb{R}^{2}$ with bottom-left vertex $(i,j) \in \mathbb{Z} \times \mathbb{Z}$. Then this tile corresponds to only one of the following four orientations:
\begin{itemize}
\item if both $i$ and $j$ are even then the tile corresponds to $\billiard$, which has positive horizontal and positive vertical orientation;
\item if $i$ is odd and $j$ is even then the tile corresponds to $\billiard[BADC]$, which has negative horizontal and positive vertical orientation;
\item if both $i$ and $j$ are odd then the tile corresponds to $\billiard[CDAB]$, which has negative horizontal and negative vertical orientation;
\item if $i$ is even and $j$ is odd then the tile corresponds to $\billiard[DCBA]$, which has positive horizontal and negative vertical orientation.
\end{itemize}
\end{definition}
%
\begin{figure}[t!]
  \centering
  \includegraphics{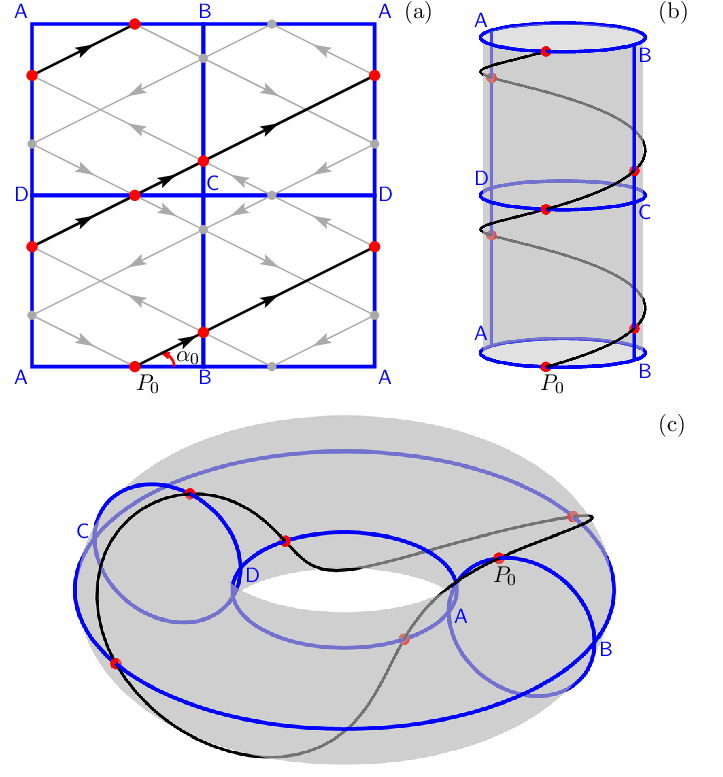}
  \caption{\label{fig:torus}
    Equivalent representation of the period-six orbit from \fref{fig:equivalence}(c) on the torus. Panel (a) illustrates the trajectory on the large square table comprising all four orientations of the square $\titlebilliard$. The left and right sides {\sf ADA} are identified to form a cylinder in panel~(b), and the top and bottom sides {\sf ABA} are subsequently identified to form the torus in panel~(c), on which the period-six orbit forms a closed curve.}
\end{figure}
%
By combining all four orientations together, we can also view $\mathbb{R}^{2}$ as being tiled with squares that have sides {\sf ADA} and {\sf ABA}, twice the length of those for the original table $\billiard$. The tiling with such larger squares is done by mere translations, rather than reflections. Consequently, this larger square is a representation of the (flat) torus $\mathbb{T}^2$, as illustrated in \fref{fig:torus}. Here, panel~(a) shows again the unfolding of the period-six orbit from \fref{fig:equivalence}(c), but as this billard trajectory disappears off the edge of the square on the right-hand side {\sf ADA}, it reappears on the left-hand side {\sf ADA} and continues with the same slope; similarly, when the top side {\sf ABA} is reached, the trajectory reappears on the bottom side {\sf ABA}. Hence, the period-six orbit is now represented by a set of three parallal straight-line segments, rather than a single straight line. These line segments form a single curve if we identify the left- and right-hand sides {\sf ADA} by folding the square into a cylinder; see \fref{fig:torus}(b). If we now also identify the top and bottom sides {\sf ABA}, as done in \fref{fig:torus}(c), then the period-six orbit is, in fact, given by a closed curve on $\mathbb{T}^2$.

The alternative representation in \fref{fig:torus} is the basis for the classical proof of existence and classification of periodic orbits on the square billiard, which uses the notion of \emph{geodesic} on $\mathbb{T}^2$, that is, a length-minimising curve. In the usual Euclidean geometry, geodesics are straight lines, and $\mathbb{T}^2$ inherits this geometry by way of the construction illustrated in \fref{fig:torus}. The theory of geodesics tell us that all  straight lines with rational slopes correspond to closed geodesics on $\mathbb{T}^2$; for example, see~\cite{petersen}. Therefore, if a trajectory on the square billiard is periodic, then the slope must be rational.
%
\begin{remark}
\label{rem:dichotomy}
The theory of geodesics also states that all straight lines with irrational slopes correspond to geodesics that densely fill the torus. This dichotomy is known as \emph{Veech dichotomy} in the literature; it implies that the square billiard is \emph{ergodically optimal}~\cite{veech1989}. We refer again to~\cite{petersen} for details. 
\end{remark}
%
The classical proof has the advantage that it extends readily to higher dimensions by using analogous arguments~\cite{sergebook}. However, this approach ignores the possibility that a billiard trajectory with rational slope terminates at one of the four vertices; indeed, the condition that the slope be rational is not sufficient to guarantee periodicity. In this paper, we analyse the \emph{trichotomy} that includes the possibility of singular orbits for a square billiard. We use the technique of unfolding in $\mathbb{R}^2$ as an alternative and more intuitive way to prove important properties of the square billiard, such as the non-existence of periodic orbits with odd periods~\cite{davissquare} and what the period of a billiard trajectory will be for a given rational slope. We will also count and classify all different equivalence classes of periodic orbits that have a given period associated with a particular rational slope.

\subsection{Periods of periodic orbits for the square billiard are even}
\label{square_odd_section}
All the examples of periodic orbits for the square billiard given so far have even periods. This is not a coincidence. In particular, is it not possible to have a periodic orbit with period three.
%
\begin{figure}[t!]
  \centering
  \includegraphics{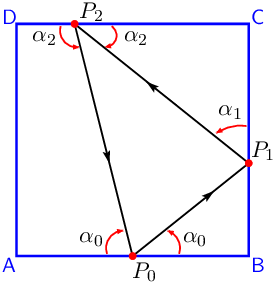}
  \caption{\label{fig:square_period3}
    Hypothetical period-three orbit on the square $\titlebilliard$ shown as a triangle composed of the three points of collision $P_0$, $P_1$, and $P_2$ at which the billiard trajectory makes angles $\alpha_{0}$, $\alpha_{1}$ and $\alpha_{2}$ with the table, respectively. The incoming and outgoing angles at these points are intended to be equal, and labelled so accordingly.} 
\end{figure}
%
\begin{proposition}
\label{prop:square_period3}
The square billiard does not admit a period-three orbit.
\end{proposition}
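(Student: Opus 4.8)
The plan is to derive a contradiction from the assumption that a period-three orbit exists by exploiting Proposition~\ref{prop:angles}, which forces every collision angle to be either $\alpha_0$ or $\frac{\pi}{2}-\alpha_0$. The key observation is a parity-style counting argument: each of the three collision points $P_0$, $P_1$, $P_2$ lies on one of the two pairs of parallel sides, and the angle at that point is determined by which pair it lies on. So I would first classify the three points by type, writing the orbit as type $(p,q)$ with $p+q=3$; since $3$ is odd, we must have $\{p,q\}=\{1,2\}$ or $\{3,0\}$, so one of the two angle-values is taken an even number of times and the other an odd number of times.

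The main structural step is to track the direction of motion as the trajectory traverses the triangle $P_0 \to P_1 \to P_2 \to P_0$. Because the orbit closes up after three segments, the total turning must be consistent with a closed polygon. First I would set up coordinates as in the excerpt, with {\sf AB} along the $x$-axis, and describe each segment by a slope of the form $\pm\tan\alpha_0$ or $\pm\cot\alpha_0$, where the sign and whether it is $\tan$ or $\cot$ is governed by the reflection rule and the type of the side just struck. The crucial point is that a reflection off a horizontal side ({\sf AB} or {\sf CD}) negates the vertical component of the velocity while preserving the horizontal one, and a reflection off a vertical side ({\sf BC} or {\sf DA}) negates the horizontal component while preserving the vertical one. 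I would then examine how the velocity vector $(v_x,v_y)$ evolves: after the full cycle of three reflections the velocity must return to its original value (since the orbit is periodic with period three), yet three reflections of the above form compose to an \emph{orientation-reversing} map of the velocity (an odd number of sign-flips distributed among the two components), which cannot fix a generic direction $(v_x,v_y)$ with $\alpha_0\in(0,\frac{\pi}{2}]$.

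More concretely, I would argue that returning to the same point with the same velocity after exactly three collisions requires the product of the three reflection matrices (each a diagonal matrix $\mathrm{diag}(\pm1,\mp1)$ acting on velocity) to equal the identity; but each such reflection has determinant $-1$, so the composition of three of them has determinant $-1$ and hence is not the identity. This immediately rules out a period-three orbit, independently of the specific geometry, and the same determinant/parity argument will generalise in the later section to show no odd period is possible. An alternative, more elementary route that harmonises with Figure~\ref{fig:square_period3} is to sum the interior angles of the collision triangle: using Proposition~\ref{prop:angles}, the three angles are drawn from $\{\alpha_0,\frac{\pi}{2}-\alpha_0\}$, and I would show that no admissible assignment makes the three segments close into a genuine triangle whose incoming and outgoing angles match the reflection law at each vertex, again because the parity forces an inconsistency in the accumulated direction.

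The hard part will be handling the case analysis cleanly rather than the core idea: I must be careful that the three collision points are genuinely \emph{distinct} and that none of the segments degenerates (e.g.\ hitting a vertex, which would make the orbit singular rather than periodic), and I must verify that the reflection-matrix bookkeeping correctly accounts for the relabelling of vertices used in Proposition~\ref{prop:angles}. The determinant argument is robust and sidesteps most of the geometric casework, so I expect the cleanest write-up to lead with the velocity-map/parity observation and relegate the explicit triangle-angle computation to a confirming remark tied to Figure~\ref{fig:square_period3}.
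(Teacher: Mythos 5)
Your proposal is correct in substance but takes a genuinely different route from the paper. The paper's proof is purely synthetic: it places $P_0$, $P_1$, $P_2$ on sides {\sf AB}, {\sf BC}, {\sf CD} (without loss of generality, since consecutive collisions cannot occur on the same side), uses Proposition~\ref{prop:angles} to get $\alpha_1=\frac{\pi}{2}-\alpha_0$ and $\alpha_2=\alpha_0$, and extracts the extra relation $\alpha_0+\alpha_2=\pi$ from the angle sum of the quadrilateral with vertices $P_0$, $P_2$, {\sf D}, {\sf A}; this forces $\alpha_0=\frac{\pi}{2}$, which generates a period-two orbit --- a contradiction. The angle-sum route you relegate to a ``confirming remark'' is essentially this argument. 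Your lead argument instead tracks the velocity: each collision acts as $\mathrm{diag}(1,-1)$ or $\mathrm{diag}(-1,1)$, so three collisions compose to a diagonal map of determinant $-1$, which cannot return the velocity to itself. This is arguably the stronger idea, since it extends verbatim to every odd period, whereas the paper explicitly abandons the geometric argument beyond period three and switches to unfolding for Theorem~\ref{thm:square_periodic}; what the paper's route buys is a self-contained, picture-level proof tied to Figure~\ref{fig:square_period3} that needs no bookkeeping of which sides are struck.

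One step you must tighten: ``determinant $-1$, hence not the identity'' does not by itself finish the proof; you need that the composition fails to fix the \emph{actual} initial velocity, not merely that it is not the identity map. Since the composition is $\mathrm{diag}(\varepsilon,-\varepsilon)$ for some $\varepsilon=\pm1$, its fixed directions are exactly the horizontal and vertical ones, i.e.\ $\alpha_0=0$ (excluded by hypothesis) or $\alpha_0=\frac{\pi}{2}$, which yields only the period-two orbit bouncing between {\sf AB} and {\sf CD} --- the same terminal contradiction the paper invokes. You should also state explicitly the (shared, harmless) assumption that a period-three orbit traverses its three distinct collision points cyclically, so that the velocity must recur after exactly three reflections; the paper makes the analogous assumption when it closes the triangle $P_0\to P_1\to P_2\to P_0$. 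With those two sentences added, your argument is complete, and the case analysis you worry about largely evaporates.
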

\begin{proof}
Suppose for the sake of contradiction that a period-three orbit exists. To focus the mind, \fref{fig:square_period3} illustrates this hypothetical billiard trajectory on the square $\billiard$. The three different points of collision are denoted $P_0$, $P_1$ and $P_2$, and their associated angles are $\alpha_{0}$, $\alpha_{1}$ and $\alpha_{2}$, respectively; here, angles with the same label are supposed to be equal, even though the image may suggest otherwise. Note that any period-three orbit must collide with three different sides of the square, because it is impossible for a billiard trajectory to incur two consecutive collisions on the same side. Since we can always rotate the table and relabel the vertices, the illustration in \fref{fig:square_period3} is representative for any period-three orbit. In other words, without loss of generality, we may assume that $P_0$ lies on the side {\sf AB}, and the other two, successive points of collision $P_1$ and $P_2$ lie on the sides {\sf BC} and {\sf CD}, respectively. Recall from \sref{sec:setting} that the billiard trajectory is uniquely generated by the pair $\langle P_0, \alpha_0 \rangle$, and Proposition~\ref{prop:angles} implies that we must have $\alpha_1 = \frac{\pi}{2} - \alpha_0$ and $\alpha_2 = \alpha_0$. Furthermore, the sum of the angles in the quadrilateral formed by $P_0$, $P_2$, and the two vertices {\sf D} and {\sf A} of the square table must be $2 \pi$, which implies that $\alpha_0 + \alpha_2 = \pi$. Therefore, if a period-three orbit exists then
\begin{displaymath}
  \left. \begin{array}{rcrcl}
            \alpha_0 &+& \alpha_2 &=& \pi \\
                            & & \alpha_2 &=& \alpha_0
          \end{array} \right\} \implies 2 \, \alpha_0 = \pi \Longleftrightarrow \alpha_0 = \tfrac{\pi}{2}.
\end{displaymath}
However, the pair $\langle P_0, \frac{\pi}{2} \rangle$ generates a period-two orbit for all $P_0 \in (0, 1)$, because the billiard trajectory will bounce back and forth between the two points $P_0$ and $P_2$; a contradiction.
\end{proof}
We cannot easily extend this geometric proof to other odd periods as the number of different cases grows very quickly and self-intersections cause difficulty to make the same arguments. Therefore, we use the technique of unfolding, explained in \sref{sec:square_unfolding}, to prove the following general result.
%
\begin{theorem}
\label{thm:square_periodic}
Any periodic orbit for the square billiard is generated by a pair $\langle P_0, \alpha_0 \rangle$, with $\alpha_0 \in (0, \frac{\pi}{2}]$ such that $\tan{(\alpha_0)}$ is rational, and the period of this periodic orbit is even. 
\end{theorem}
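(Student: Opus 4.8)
The plan is to use the unfolding construction from Section~\ref{sec:square_unfolding} together with the tiling orientations introduced in Definition~\ref{def:unfolding_orientation}. Under unfolding, a billiard trajectory generated by $\langle P_0, \alpha_0 \rangle$ becomes a single straight line in $\mathbb{R}^2$ passing through $P_0$ on the $x$-axis with slope $\tan{(\alpha_0)}$. The key observation is that the trajectory is periodic precisely when this straight line eventually passes through a translated copy of the \emph{original} table $\billiard$ that has the same orientation \emph{and} the same relative position of the collision point; by Definition~\ref{def:unfolding_orientation}, a tile has the original orientation $\billiard$ exactly when its bottom-left vertex $(i,j)$ has both $i$ and $j$ even. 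I would first establish the rationality claim: if the slope $\tan{(\alpha_0)}$ were irrational, the unfolded line would never return to a point congruent to $P_0$ in a tile of the original orientation (this is exactly the Veech dichotomy recalled in Remark~\ref{rem:dichotomy}, so the line densely fills the torus and is not closed), so periodicity forces $\tan{(\alpha_0)}$ to be rational.

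Next I would extract the period from the unfolded picture. Write $\tan{(\alpha_0)} = b/a$ in lowest terms with $a, b$ positive integers and $\gcd(a,b)=1$. The unfolded line from $P_0$ on the $x$-axis first returns to an $x$-axis point congruent to $P_0$ (modulo the period-two tiling in the horizontal direction, i.e.\ modulo translation by the doubled side {\sf ABA} of length $2$) after rising a vertical distance that is an even integer and running a horizontal distance that is an even integer, since the torus in \fref{fig:torus} has side length $2$ in each direction. Concretely, the smallest translation vector $(2m, 2n)$ lying on the line with slope $b/a$ and returning to the original orientation is determined by $2n/2m = b/a$, giving the minimal solution in terms of $a$ and $b$. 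The number of distinct collisions equals the number of grid lines (horizontal plus vertical unit lines of the fine tiling) that the line segment crosses before closing up, and I would count these crossings in terms of $a$ and $b$.

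The heart of the parity argument, and the step I expect to be the main obstacle, is showing that this total crossing count is always even. The idea is to count separately the collisions on the horizontal sides ({\sf AB}/{\sf CD}) and on the vertical sides ({\sf BC}/{\sf DA}): crossings of horizontal grid lines correspond to the former, crossings of vertical grid lines to the latter. Each crossing of a fine grid line flips exactly one of the two orientation parities of Definition~\ref{def:unfolding_orientation} (a horizontal-line crossing flips the parity of $j$, a vertical-line crossing flips the parity of $i$). For the trajectory to close up into the original orientation $\billiard$, both parities must return to even, so the number of horizontal-line crossings and the number of vertical-line crossings must \emph{each} be even. Summing two even numbers gives an even period, which is the desired conclusion. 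The delicate point is handling the boundary cases cleanly: the special angle $\alpha_0 = \frac{\pi}{2}$ (a vertical line, giving the period-two orbit of \fref{fig:square_examples}(a)), and ensuring that the minimal returning translation vector is counted correctly so that no collision is double-counted and no lattice point (vertex) is hit along the way, since passing through a vertex would make the trajectory singular rather than periodic. I would argue that if the minimal returning vector $(2m,2n)$ with slope $b/a$ in lowest terms is chosen, then along the open segment the line meets no lattice point of the coarse torus grid, so every crossing is a genuine interior collision and the parity bookkeeping is exact.
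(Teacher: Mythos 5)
Your proposal is correct and follows essentially the same route as the paper's proof: unfold the orbit to a straight line, use Definition~\ref{def:unfolding_orientation} to argue that returning to a tile of the original orientation forces the horizontal and vertical reflection counts to \emph{each} be even, and identify the (least) period with the total number of grid-line crossings along the minimal return vector, checking that no crossing is double-counted and no vertex is hit. The only cosmetic difference is that you first invoke the Veech dichotomy of Remark~\ref{rem:dichotomy} to get rationality, whereas the paper reads the rational slope $\frac{p}{q}$ directly off the return vector --- a derivation your own second paragraph also contains.
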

%
\begin{proof}
Consider a periodic orbit for the square $\billiard$ generated by a pair $\langle P_0, \alpha_0 \rangle$ with $P_0 \in [0, 1)$ and $\alpha \in (0, \frac{\pi}{2}]$. Note that $P_0 \neq 0$, because a periodic billiard trajectory that starts at a vertex must also return to this vertex, which would make it a singular orbit. The unfolding on $\mathbb{R}^{2}$ of such a periodic orbit is a straight line with slope $\tan{(\alpha_0)}$ that passes through the point at distance $P_0$ from the vertex {\sf A} at the origin, and such that it intersects the tiling of $\mathbb{R}^2$ along other (although not necessarily all) sides {\sf AB} at distances $P_0$ from the shifted vertex {\sf A} in translated copies of the original table $\billiard$.
%
\begin{figure}[t!]
  \centering
  \includegraphics{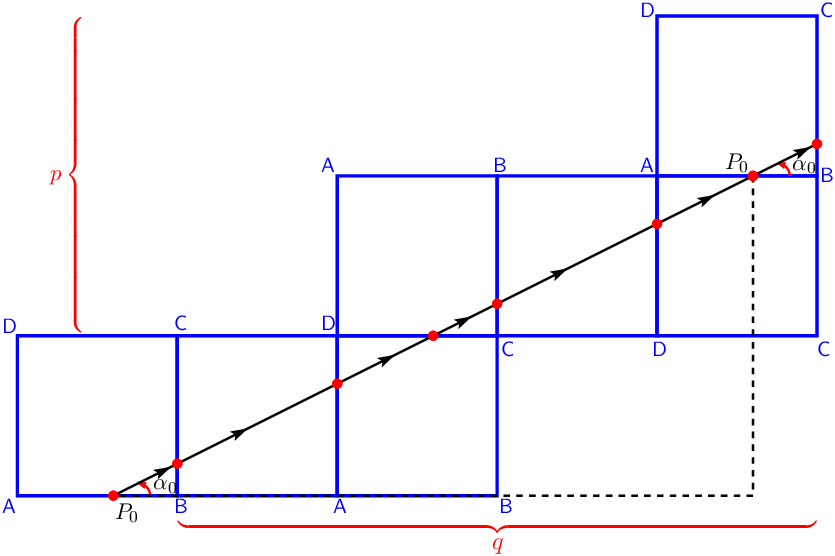}
  \caption{\label{fig:square_alternate_periodic}
    Unfolding on $\mathbb{R}^{2}$ of a periodic orbit for the square $\titlebilliard$ with period $p + q$ composed of $p$ vertical and $q$ horizontal reflections before returning to the initial position $P_0$.} 
\end{figure}
%
An example is shown in \fref{fig:square_alternate_periodic}, where again the period-six orbit from \fref{fig:equivalence}(c) was used. The slope $\tan{(\alpha_0)}$ of the unfolded trajectory is determined by the difference between the $x$- and $y$-coordinates of the initial point $P_0$ and its translated copy; this is suggestively indicated by the dashed triangle in \fref{fig:square_alternate_periodic}. Suppose that the periodic orbit is of type $(p, q)$, for some $p$, $q \in \mathbb{N}$, which means that the unfolded trajectory passes through a translated copy of $P_0$ (with the correct orientation) for the first time after $p$ vertical and $q$ horizontal reflections. Since the distance between $P_0$ and the vertex {\sf A} is preserved in the translated copy, the line through $(0,0)$ and $(q, p)$ in $\mathbb{R}^2$ is parallel to the unfolded trajectory. Hence, the slope of the unfolded trajectory is $\tan{(\alpha_0)} = \frac{p}{q}$, which is rational, as claimed.

Furthermore,  Definition~\ref{def:unfolding_orientation} states that a translated copy of the billiard table with orientation $\billiard$ is only obtained when the (integer) coordinates of the shifted vertex {\sf A} are both even, that is, $p$ and $q$ must both be even. The period of the periodic orbit is given by the total numer of distinct collisions; we claim that this number is $p + q$, and since $p$ and $q$ are even, the period of the periodic orbit is even. It remains to prove that the total numer, say $K$, of distinct collisions is $p + q$. Recall that the technique of unfolding transforms the billiard trajectory into a straight line by reflecting the table at points of collision, instead of changing the direction of motion. Hence, $K$ is at most equal to the number of reflections, that is, $K \leq p + q$. Now suppose $K < p + q$, which means that the billiard trajectory is repeated after $K$ collisions. However, $p$ and $q$ are, by definition, the number of horizontal and vertical reflections needed to pass through a translated copy of $P_0$ with the correct orientation for the \emph{first} time. Hence, $P_\ell \neq P_0$ for all $0 < \ell < p + q$; therefore, $K = p + q$ and the period is even.
\end{proof}
In what follows, we characterise all periodic orbits of period $K$, for any integer $K \in 2 \, \mathbb{N}$, in terms of the conditions we must impose on $P_0 \in [0, 1)$ and $\alpha \in (0, \frac{\pi}{2}]$ such that the pair $\langle P_0, \alpha_0 \rangle$ gives rise to such a period-$K$ orbit for the square billiard $\billiard$. In particular, we confirm that periodic orbits exist for all even periods, and explain how many different equivalence classes there are.

\subsection{Construction of periodic orbits for the square billiard}
\label{sec:classify}
If the trajectory generated by a pair $\langle P_0, \alpha_0 \rangle$ is periodic, then the unfolded trajectory will intersect shifted copies of the side {\sf AB} infinitely many times at distances $P_0$. The (rational) slope $\tan{(\alpha_0)}$ of this trajectory can be determined using any such shifted copy: if a periodic orbit is obtained after $p$ horizontal and $q$ vertical reflections then the same periodic orbit is obtained after $d \, p$ horizontal and $d \, q$ vertical reflections, for any $d \in \mathbb{N}$, because $\tan{(\alpha_0)} = \frac{p}{q} = \frac{d \, p}{d \, q}$. For the same reason, if $d \in \{ 3, \ldots,\min{(p, q)} \}$ divides both $p$ and $q$ then there exists an intersection at distance $P_0$ on a shifted copy of the side {\sf AB} that lies closer to the original {\sf AB}. Of course, $d = 2$ divides $p$ and $q$, because they are both even, but the corresponding point $(P_0 + \frac{p}{2},\, \frac{q}{2})$ does not necessarily lie on the side {\sf AB}, or if it does, this side may have the reflected orientation {\sf BA} instead. Inspired by Baxter and Umble~\cite{baxter} who give a similar definition for the equilateral triangle, we formalise the notion of \emph{least} period of a periodic unfolded trajectory.
\begin{definition}
\label{def:square_least}
Given an unfolded trajectory on $\mathbb{R}^2$ that is periodic after $p$ horizontal and $q$ vertical reflections. Then its \emph{least} period is $p+q$ if $\gcd{(p, q)} = 2$.
\end{definition}
To generate a periodic orbit with least period $K \in 2 \, \mathbb{N}$, we first construct the corresponding unfolded trajectory in $\mathbb{R}^2$. This unfolded trajectory is given by a line with slope $\frac{p}{q}$, where $p \in \mathbb{N}$ and $q \in \mathbb{N}$ are chosen such that $\gcd{(p, q)} = 2$ and $p + q = K$. In fact, since both $p$ and $q$ are even, we can look for $m = \frac{p}{2}$ and $n = \frac{q}{2}$ such that $n$ and $m$ are co-prime and $m + n = \frac{K}{2}$; indeed, $\frac{p}{q} = \frac{m}{n}$ and $\gcd{(p, q)} = 2 \, \gcd{(m, n)} = 2$. 

For example, to generate a periodic orbit with least period $K = 4$, e.g., as shown in \fref{fig:square_examples}(b), we must find $m, n \in \mathbb{N}$ with $\gcd{(n, m)} = 1$ such that $m + n = \frac{K}{2} = 2$. The only such candidate is $n = m = 1$. Therefore, any period-four orbit is generated by a pair $\langle P_0, \alpha_0 \rangle$ with $\alpha_0 = \atan{(1)} = \frac{\pi}{4}$. Here, the choice for $P_0 \in [0, 1)$ is arbitrary, except that we must avoid generating a singular orbit; for the period-four orbit, it is perhaps not hard to see that the only restriction is $P_0 \neq 0$, but see already \sref{sec:gendiag} for details. Note that, for any other even period $K$, two easy choices are $m = 1$ and $n = \frac{K}{2} - 1$, or vice versa, which both satisfy Definition~\ref{def:square_least}. Hence, as long as $m \neq n$, this trivial decomposition already generates two different periodic orbits with the same period: one periodic orbit is of type $(2,\, 2 n)$ and the other of type $(2 m,\, 2)$, which means, according to Definition~\ref{def:POclass}, that these periodic orbits are from the two different equivalence classes $\mathcal{C}_K(2)$ and $\mathcal{C}_K(2 m) = \mathcal{C}_K(K - 2)$, respectively.

For sufficiently large $K$, we expect there to be other pairs of co-prime numbers that sum to $\frac{K}{2}$, that is, we expect there to be more equivalence classes and different types of periodic orbits with the same period $K \in 2 \, \mathbb{N}$. To determine how many possible decompositions there are, we use Euler's totient function from number theory~\cite{totient}. For any $N \in \mathbb{N}$, Euler's totient function $\varphi(N)$ counts the number of natural numbers $n \in \{ 1, \ldots, N \}$ such that $\gcd{(n, N)} = 1$. The integers $n$ that satisfy this property are referred to as \emph{totatives} of $N$. 
\begin{proposition}
\label{prop:eulertotient}
The total number of ordered pairs $(m, n)$ with $m, n \in \mathbb{N}$ such that $\gcd{(m,n)} = 1$ is equal to Euler's totient function $\varphi(N)$ evaluated at $N = m+n$.
\end{proposition}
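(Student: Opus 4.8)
The plan is to fix the sum $N = m + n$ and reduce the two-variable counting problem to the one-dimensional problem of counting totatives of $N$. The key observation is that, once $N$ is fixed, an ordered pair $(m, n)$ of positive integers with $m + n = N$ is completely determined by its first entry: from $m \geq 1$ and $n = N - m \geq 1$ we get $1 \leq m \leq N - 1$, and conversely every such $m$ yields a valid pair. Thus the assignment $m \mapsto (m,\, N - m)$ is a bijection between $\{1, \ldots, N-1\}$ and the set of ordered pairs of positive integers summing to $N$, so counting admissible ordered pairs is the same as counting admissible values of $m$.

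First I would establish the identity $\gcd(m, n) = \gcd(m, N)$. This follows from the elementary property $\gcd(a, b) = \gcd(a, a + b)$ of the greatest common divisor: since $n = N - m$, we have $\gcd(m, n) = \gcd(m,\, N - m) = \gcd(m, N)$. Consequently the coprimality condition $\gcd(m, n) = 1$ is equivalent to $\gcd(m, N) = 1$, that is, to $m$ being a totative of $N$. Under the bijection above, the ordered pairs we wish to count therefore correspond exactly to the totatives of $N$ lying in $\{1, \ldots, N-1\}$.

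It then remains to compare this with $\varphi(N)$, which by the definition used here counts the totatives of $N$ in the full range $\{1, \ldots, N\}$. The two counts differ only in whether $N$ itself is included, and since $\gcd(N, N) = N$, the integer $N$ is a totative only in the degenerate case $N = 1$; for every $N \geq 2$ it is excluded, so the number of totatives in $\{1, \ldots, N-1\}$ equals $\varphi(N)$. Combining this with the bijection gives the claim. I do not anticipate a genuine obstacle here: the argument is a short reduction via the Euclidean $\gcd$ identity followed by a boundary check at $m = N$. The only point needing care is the degenerate value $N = 1$, which is harmless in our setting since every relevant sum is $N = \tfrac{K}{2} \geq 2$.
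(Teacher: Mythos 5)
Your proposal is correct and follows essentially the same route as the paper: reduce to counting totatives via the identity $\gcd(m, n) = \gcd(m,\, m+n) = \gcd(m, N)$. Your explicit bijection $m \mapsto (m,\, N-m)$ and the boundary check that $m = N$ is excluded for $N \geq 2$ are minor refinements that the paper leaves implicit, but the core argument is identical.
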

%
\begin{proof}
Consider $m, n \in \mathbb{N}$ with $\gcd{(m, n)} = 1$ and define $N = m + n$. By definition, $\gcd{(m, n)} = \gcd{(m,\, m + n)} = \gcd{(m, N)}$. However, $\gcd{(m, n)} = 1$, so $\gcd{(m, N)} = 1$, which means that $m$ is a totative of $N$. Therefore, each ordered pair $(m, n)$ is determined by whether $m$ is a totative of $N$, and the total number of possibilities is $\varphi(N)$.
\end{proof}
The total of $\varphi(N)$ different pairs $(m, n)$ lead to $\varphi(N)$ different ratios $\frac{m}{n}$, so that we may conclude the following.
\begin{corollary}
\label{cor:square_unique_angles}
For $K = 2 \, N$, with $N \in \mathbb{N}$, there are $\varphi(N)$ different types of period-$K$ orbits for the square billiard, generated by $\varphi(N)$ different angles in the interval $(0, \frac{\pi}{2}]$, which represent a total of $\varphi(N)$ different equivalence classes.
\end{corollary}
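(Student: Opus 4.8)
The plan is to exhibit explicit bijections between three finite sets---the admissible angles $\alpha_0 \in (0, \frac{\pi}{2}]$, the types $(p,q)$ of least-period-$K$ orbits, and the equivalence classes $\mathcal{C}_K(\cdot)$---and then to count any one of them. The organising observation is that, by Theorem~\ref{thm:square_periodic} together with Definition~\ref{def:square_least}, a periodic orbit of least period $K = 2N$ is completely encoded by its unfolded slope, which must be a rational number $\frac{p}{q}$ with $\gcd{(p,q)} = 2$ and $p + q = K$. Writing $p = 2m$ and $q = 2n$ turns this into the equivalent data of a coprime ordered pair $(m, n)$ with $m + n = N$, and the slope becomes $\frac{m}{n}$. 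Thus the entire count reduces to counting such pairs, which is exactly what Proposition~\ref{prop:eulertotient} delivers as $\varphi(N)$.

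First I would argue the forward direction: each coprime pair $(m, n)$ with $m + n = N$ determines the angle $\alpha_0 = \atan{(\frac{m}{n})} \in (0, \frac{\pi}{2})$, the type $(2m,\, 2n)$, and hence the class $\mathcal{C}_K(2m)$. I would then check that these three assignments are injective. For the angles, since $m + n = N$ is fixed, the map $m \mapsto \frac{m}{n} = \frac{m}{N - m}$ is strictly monotone, and $\tan$ is injective on $(0, \frac{\pi}{2})$, so distinct pairs yield distinct angles. For the classes, distinct values of $m$ give distinct subscripts $2m$, so the classes $\mathcal{C}_K(2m)$ are pairwise distinct by Definition~\ref{def:POclass}. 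This already produces at least $\varphi(N)$ distinct angles, types, and equivalence classes.

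Next I would argue \emph{completeness}, that there are no others. By Theorem~\ref{thm:square_periodic} every period-$K$ orbit unfolds to a line of rational slope with even period, and the first-return argument in that proof shows that its defining reflection counts $(p, q)$ satisfy $p + q = K$ with $\gcd{(p, q)} = 2$, that is, $(p, q) = (2m,\, 2n)$ for a coprime pair summing to $N$. Hence every period-$K$ orbit falls into one of the $\varphi(N)$ families already listed, and the three bijections are complete. The edge case $N = 1$ (period two) should be treated separately: here the only admissible unfolded slope is vertical, corresponding to $\alpha_0 = \frac{\pi}{2}$, consistent with $\varphi(1) = 1$ and with the interval $(0, \frac{\pi}{2}]$ being closed at the right endpoint.

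The main obstacle I anticipate is not the counting itself but the passage from slope-and-type data to an actual realised periodic orbit: for each admissible angle I must exhibit at least one starting point $P_0 \in [0, 1)$ whose generated trajectory is genuinely periodic rather than singular. Establishing that an admissible (non-vertex-terminating) $P_0$ always exists---and describing precisely which values of $P_0$ must be excluded---is exactly the content deferred to \sref{sec:gendiag}, and I would invoke those results to confirm that each of the $\varphi(N)$ types is nonempty. This upgrades the bound to an equality, so that the number of realised equivalence classes is indeed $\varphi(N)$ and not merely an upper bound.
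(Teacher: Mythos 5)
Your proposal is correct and follows essentially the same route as the paper: reduce the type $(p,q)$ with $\gcd{(p,q)} = 2$ and $p+q = K$ to a coprime ordered pair $(m,n)$ with $m+n = N$, and count these via Proposition~\ref{prop:eulertotient}. The injectivity, completeness, and realizability checks you add (the last deferred to \sref{sec:gendiag}, exactly where the paper also resolves it) are details the paper leaves implicit rather than a different argument.
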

%
\begin{figure}[t!]
  \centering
  \includegraphics{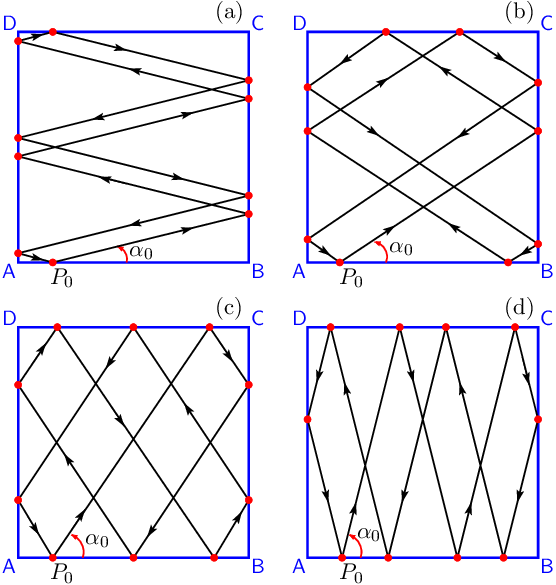}
  \caption{\label{fig:period10}
    Period-ten orbits generated from $P_0 = 0.15$ for the square $\titlebilliard$ from each of the four equivalence classes. The periodic orbits in panels~(a)--(d) are of type $(2, 8)$, $(4, 6)$, $(6, 4)$, and $(8, 2)$, respectively.}
\end{figure}
%
For example, there exist $\varphi(5) = 4$ different types of period-ten orbits. Note that $5$ is prime, so the totatives of $5$ are all integers $1$, $2$, $3$, $4$ less than $5$. Therefore, there are four possible ordered pairs, namely, $(1, 4)$, $(2, 3)$, $(3, 2)$, and $(4, 1)$, leading to four different families of period-ten orbits from the corresponding equivalence classes $\mathcal{C}_{10}(2)$, $\mathcal{C}_{10}(4)$, $\mathcal{C}_{10}(6)$, and $\mathcal{C}_{10}(8)$, respectively. \Fref{fig:period10} illustrates these four different types of period-ten orbits generated by the pairs $\langle P_0, \alpha_0 \rangle$ with $P_0 = 0.15$ fixed and $\tan{(\alpha_0)} = \frac{1}{4}$, $\frac{2}{3}$, $\frac{3}{2}$, and $\frac{4}{1}$, in panels~(a)--(d), respectively. As another example, Corollary~\ref{cor:square_unique_angles} implies that there cannot exist a period-eight orbit of type $(4, 4)$, that is, a periodic orbit that unfolds into a trajectory which repeats after four vertical and four horizontal reflections. Indeed, the corresponding slope would have to be $1$ and, thus, $\alpha_0 = \atan(1) = \frac{\pi}{4}$, which generates a period-four orbit. Hence, such a period-eight orbit would just be a double copy of the period-four orbit; while its period may be eight, the \emph{least} period of this periodic orbit is four.
\begin{remark}
There is no efficient algorithm to find the totatives of a given integer $N \in \mathbb{N}$, because it is equivalent to prime factorisation. Indeed, Euler's totient function is explicitly given by the classical Euler's product formula,
\begin{displaymath}
  \varphi(N) = N \, \prod\limits_{p | N} \left( 1 - \frac{1}{p} \right),
\end{displaymath}
that computes a product over all distinct prime factors of $N$. An alternative formula,
\begin{displaymath}
  \varphi(N) = \sum_{k=1}^{N} \gcd{(k, N)} \cos{ \left( 2 \pi \tfrac{k}{N} \right) },
\end{displaymath}
can be derived using the discrete Fourier transform~\cite{schramm_gcd}; unfortunately, finding the greatest common divisors $\gcd{(k,N)}$ is also computationally as complex as finding prime factors.
\end{remark}
%

\subsection{Singular orbits for the square billiard}
\label{sec:gendiag}
So far, we have not imposed any conditions on the choice for $P_0 \in [0, 1)$. Indeed, with our definition of equivalence class, a periodic orbit generated by $\langle P_0, \alpha_0 \rangle$ is equivalent to other periodic orbits generated by the same initial angle $\alpha_0$, but starting from a different initial point $P_0$. However, we cannot choose just any value for $P_0$. For example, we have already seen that $P_0 = 0$ leads to a singular rather than a periodic orbit. Perhaps there are other values $P_0 \in [0, 1)$ for which the pair $\langle P_0, \alpha_0 \rangle$ generates a billiard trajectory that terminates at one of the vertices? To answer this question, let us first consider singular orbits in general.
\begin{proposition}
\label{prop:square_gendiag}
If a billiard trajectory starts at a vertex with an irrational slope then it never terminates at a vertex on the square billiard.
\end{proposition}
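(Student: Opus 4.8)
The plan is to argue entirely in the unfolded picture on $\mathbb{R}^2$ developed in Section~\ref{sec:square_unfolding}. The key observation is structural: since the side {\sf AB} has unit length and $\billiard$ is a square, the unfolding tiles the plane by unit squares whose corners are exactly the integer lattice points $\mathbb{Z} \times \mathbb{Z}$. Hence every vertex of every copy of the table—in all four orientations of Definition~\ref{def:unfolding_orientation}—unfolds to a lattice point, and the billiard trajectory terminates at a vertex \emph{if and only if} its unfolded straight line passes through a lattice point other than its starting point. This reduces the dynamical question to a purely arithmetic one about lines through lattice points.

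First I would set up the geometry. A trajectory that starts at a vertex begins, in the unfolding, at a lattice point; by the conventions of Section~\ref{sec:setting} this is the vertex {\sf A} at the origin, so the unfolded trajectory is the line $y = s\,x$ with slope $s = \tan(\alpha_0)$ irrational. Next I would show by contradiction that this line meets no further lattice point: if it passed through some $(i, j) \in \mathbb{Z} \times \mathbb{Z}$ with $(i, j) \neq (0, 0)$, then $j = s\,i$; the case $i = 0$ forces $j = 0$, while $i \neq 0$ forces $s = \frac{j}{i} \in \mathbb{Q}$, each contradicting the hypothesis. Since the unfolded trajectory therefore avoids every vertex of the tiling, the original billiard trajectory never terminates at a corner, as claimed. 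I would also note in passing that the excluded direction $\alpha_0 = \frac{\pi}{2}$ gives a vertical line, which has no (irrational) slope and so lies outside the hypothesis.

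There is no genuine computational obstacle here; the entire content is the contradiction above, which is immediate once the setup is in place. The one step that requires care—and the only place where earlier results must be invoked explicitly—is the identification of the square's vertices with the integer lattice points of the tiling, together with the fact (underlying Theorem~\ref{thm:square_periodic}) that the unfolded trajectory is a bona fide straight line that faithfully represents the billiard dynamics. Once that correspondence is secured, so that ``terminating at a vertex'' is exactly ``the unfolded line hits a lattice point,'' the irrationality of the slope does all the work.
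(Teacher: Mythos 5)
Your argument is correct and is essentially the paper's own proof: both reduce the question, via unfolding, to whether a line of irrational slope through the origin can pass through another point of $\mathbb{Z}\times\mathbb{Z}$, and conclude it cannot. The only cosmetic difference is that the paper phrases the whole thing as a contradiction starting from an assumed singular orbit and explicitly notes that rotating the table to place the starting vertex at {\sf A} at worst replaces the slope by its reciprocal (preserving irrationality), a detail you fold into the without-loss-of-generality setup.
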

\begin{proof}
Suppose for the sake of contradiction that there exists a singular orbit that starts from a vertex with irrational slope. Without loss of generality, we may assume that this billiard trajectory starts at vertex {\sf A} of the square $\billiard$; if not, we rotate the table and relabel the vertices. We now consider the corresponding unfolded trajectory in the plane $\mathbb{R}^{2}$. Since we assumed that the billiard trajectory is singular, the unfolded trajectory must pass through a shifted copy of one of the vertices; by Definition~\ref{def:unfolding_orientation}, these shifted copies lie in $\mathbb{Z} \times \mathbb{Z}$. Therefore, the unfolded trajectory contains a line segment from the original vertex {\sf A} at $(0, 0)$ to a shifted copy of this same or another vertex at the point, say, $(m, n) \in \mathbb{Z} \times \mathbb{Z}$; here both $n \neq 0$ and $m \neq 0$, because the slope is neither $0$ nor $\infty$. This means that the unfolded trajectory is the straight line with slope $\frac{m}{n} \in \mathbb{Q}$. Any possible rotation back to the original billiard table either preserves this slope (rotation by $0$ or $\pi$), or changes it to $\frac{n}{m} \in \mathbb{Q}$ (rotation by $\pm \frac{\pi}{2}$), both of which are rational; a contradiction.
\end{proof}
\begin{corollary}
\label{cor:square_gendiag}
If a billiard trajectory for the square billiard $\billiard$ that starts at a point $P_0 \neq {\sf A}$ with a rational slope is a singular orbit then the trajectory generated from $P_0$ in the opposite direction, using the reflected (negative) slope, will also be singular. 
\end{corollary}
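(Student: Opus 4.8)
The plan is to unfold the entire bi-infinite billiard trajectory as a single straight line and then exploit the elementary fact that a line of rational slope passing through one lattice point passes through a bi-infinite, equally spaced family of lattice points. By hypothesis the trajectory starts at $P_0 = (P_0, 0)$ on the side {\sf AB} with $P_0 \in (0,1)$ and outgoing slope $\tan(\alpha_0) = \tfrac{m}{n}$, which I would write in lowest terms so that $\gcd(m, n) = 1$. The crucial observation is that unfolding applies in the forward and backward directions at once: reflecting the table $\billiard$ at every collision straightens the whole trajectory into a single line $L = \{(P_0 + t\,n,\, t\,m) : t \in \mathbb{R}\}$ in the tiled plane. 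The forward trajectory, with outgoing slope $+\tfrac{m}{n}$, corresponds to the ray $t > 0$, while the backward trajectory — which leaves $P_0$ with the reflected slope $-\tfrac{m}{n}$ forced by the reflection off {\sf AB} — corresponds to the ray $t < 0$ on the very same line $L$.

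Next I would translate the singularity hypothesis into the unfolding. Since the forward trajectory is singular, it terminates at a vertex, so by Definition~\ref{def:unfolding_orientation} the ray $t > 0$ meets a lattice point $(P_0 + t_1\,n,\, t_1\,m) \in \mathbb{Z} \times \mathbb{Z}$ for some $t_1 > 0$. The arithmetic core is then immediate: because $\gcd(m, n) = 1$, any two lattice points on $L$ differ by an integer multiple of the primitive vector $(n, m)$, so the lattice points on $L$ occur exactly at the parameters $t = t_1 + k$ with $k \in \mathbb{Z}$. Choosing $k$ with $t_1 + k < 0$ produces a lattice point on the ray $t < 0$, that is, on the unfolded backward trajectory. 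This lattice point is the image of a vertex, so the backward trajectory reaches a vertex after finitely many collisions and is therefore also singular.

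I expect the only delicate point to be the claim that the forward and backward trajectories unfold to the \emph{same} line $L$, rather than to two distinct rays related by a reflection about the $x$-axis (which would reintroduce the starting point $P_0$ in a position that is awkward to compare with the lattice). The clean way around this is to commit to unfolding the bi-infinite trajectory as a single object from the outset, so that the backward half is literally the continuation of $L$ to negative parameter and no separate reflection argument is needed; singularity is an intrinsic property of the billiard trajectory and does not depend on the chosen unfolding. Finally, the hypothesis $P_0 \neq {\sf A}$ ensures that $(P_0, 0)$ is not itself a lattice point, so that $P_0$ is a genuine interior collision point and the statement is non-vacuous.
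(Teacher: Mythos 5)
Your proof is correct and follows essentially the same route as the paper, which justifies the corollary by observing that both trajectories unfold to the same line of rational slope in $\mathbb{R}^2$ and hence that every singular orbit lies on a trajectory running from vertex to vertex; you simply make explicit the arithmetic fact that such a line, once it meets one lattice point, meets a bi-infinite equally spaced family of them, so the backward ray ($t<0$) must also reach a vertex. Your resolution of the one delicate point---unfolding the bi-infinite trajectory as a single line so that the backward half is literally the continuation to negative parameter---is exactly the paper's (implicit) choice, so no further comparison is needed.
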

Indeed, both billiard trajectories unfold to the same line with rational slope in the plane $\mathbb{R}^2$. Hence, every singular orbit is contained in a billiard trajectory that both starts and terminates at a vertex. Such billiard trajectories are called \emph{generalised diagonals}~\cite{katok1987}.
\begin{definition}
\label{def:square_gendiag}
A generalised diagonal is a singular orbit that starts at a vertex. The length of a generalised diagonal is given by the total number of (non-vertex) collision points.
\end{definition}
%

\begin{figure}[t!]
  \centering
  \includegraphics{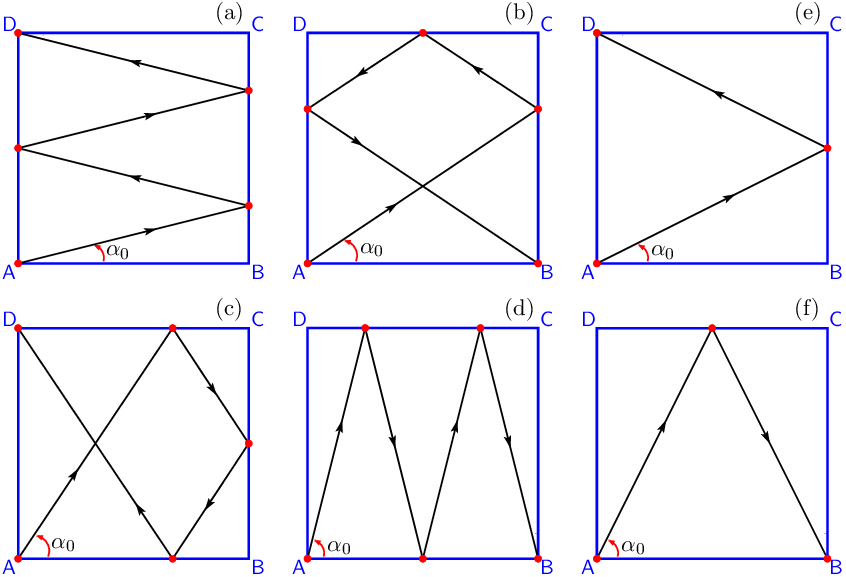}
  \caption{\label{fig:gendiag}
    Examples of generalised diagonals. The panels in the first two columns, labelled (a)--(d), show the four different types of generalised diagonals of length three, and the right-most two panels labelled (e) and (f) show the two types of length two; compare also with \fref{fig:period10}.}
\end{figure}
%
We now return to the question which values $P_0 \in [0, 1)$ for the pair $\langle P_0, \alpha_0 \rangle$ with $\tan{(\alpha_0)}$ rational generate periodic rather than singular orbits. As an example, consider the four different period-ten orbits shown in \fref{fig:period10}. Notice that the periodic orbit in panel~(a), which starts at $P_0 = 0.15$ with slope $\frac{1}{4}$, has five pairs of collision points that are located close together: two pairs on the side {\sf BC}, one pair on {\sf DA}, one pair near vertext {\sf A} on sides {\sf AB} and {\sf DA}, and a fifth pair near vertex {\sf D} on sides {\sf DA} and {\sf CD}. If we shift $P_0$ towards vertex {\sf A}, these pairs of collision points will move even closer together, until each pair merges as $P_0$ reaches {\sf A}. The resulting billiard trajectory, which is shown in \fref{fig:gendiag}(a), is a generalised diagonal that starts at vertex {\sf A} and terminates at vertex {\sf D} after three collisions; hence, it has length three. A similar shift of $P_0$ to vertex {\sf A} for the other three types of period-ten orbits, shown in panels~(b)--(d) of \fref{fig:period10}, leads to different generalised diagonals that start at {\sf A} with slopes $\frac{2}{3}$, $\frac{3}{2}$, and $4$, respectively; these are shown in panels~(b)--(d) of \fref{fig:gendiag}, respectively. (We chose a somewhat pecular labelling of the panels in \fref{fig:gendiag} such that the labels of the first two columns match the labels of the panels in \fref{fig:period10}.) Note that each of these generalised diagonals also has length three. Analogous to Definition~\ref{def:POclass} of equivalent periodic orbits, the four generalised diagonals in \fref{fig:gendiag}(a)--(d) are from different equivalence classes, because they have, respectively, $0$, $1$, $2$ and $3$ of their three collision points on the horizontal sides of the square billiard.

\Frefs{fig:gendiag}(e) and~\ref{fig:gendiag}(f) show the generalised diagonals obtained when we use the same approach for the period-six orbits of types $(2, 4)$ and $(4, 2)$ in \fref{fig:equivalence}(c) and~\ref{fig:equivalence}(a), respectively. These generalised diagonals each have length one and have $0$ or $1$ collision point on the horizontal sides of the square billiard.
\begin{proposition}
\label{prop:gendiag_length}
Consider a generalised diagonal for the square billiard $\billiard$ that starts at vertex {\sf A} with rational slope $\frac{m}{n}$, where $\gcd{(m, n)} = 1$. Then this generalised diagonal has length $m + n - 2$ given by $m - 1$ collision points on the horizontal sides {\sf AB} or {\sf CD} and $n - 1$ collision points on the vertical sides {\sf BC} or {\sf DA}. 
\end{proposition}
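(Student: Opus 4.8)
The plan is to use the unfolding technique on $\mathbb{R}^2$, exactly as in the proof of Theorem~\ref{thm:square_periodic}. A generalised diagonal starting at vertex {\sf A} with rational slope $\frac{m}{n}$, where $\gcd{(m, n)} = 1$, unfolds into a straight line segment that begins at the origin $(0, 0)$ and terminates at the first shifted copy of a vertex it reaches. Since shifted vertices lie precisely at the integer lattice points $\mathbb{Z} \times \mathbb{Z}$ (by Definition~\ref{def:unfolding_orientation}), and the line has slope $\frac{m}{n}$ in lowest terms, the first lattice point it meets after the origin is $(n, m)$: any earlier lattice point $(n', m')$ on the line would force $\frac{m'}{n'} = \frac{m}{n}$ with $0 < n' < n$, contradicting $\gcd{(m, n)} = 1$. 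Thus the unfolded segment runs from $(0, 0)$ to $(n, m)$ and contains no interior lattice points, confirming that the trajectory does indeed terminate at a vertex and giving the endpoint explicitly.

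Next I would count the collisions by counting how many vertical and horizontal tile-boundary lines the open segment from $(0,0)$ to $(n, m)$ crosses in its interior. Each crossing of a vertical line $\{x = k\}$, for integer $k$, corresponds to a reflection about a vertical side {\sf BC} or {\sf DA}, and each crossing of a horizontal line $\{y = k\}$ corresponds to a reflection about a horizontal side {\sf AB} or {\sf CD}. The segment crosses the vertical lines $x = 1, 2, \ldots, n-1$, giving $n - 1$ interior crossings, and the horizontal lines $y = 1, 2, \ldots, m-1$, giving $m - 1$ interior crossings. Because the segment has no interior lattice points, no crossing coincides with a corner, so these crossings are genuine (non-vertex) collision points and none is double-counted. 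The key point is that a crossing of a vertical tile-line is a collision with a vertical side, so it contributes to the count on {\sf BC} or {\sf DA}, while a crossing of a horizontal tile-line lands on a horizontal side {\sf AB} or {\sf CD}; this matches the claimed split of $m - 1$ collisions on the horizontal sides and $n - 1$ on the vertical sides, and summing gives total length $(m - 1) + (n - 1) = m + n - 2$.

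The one point requiring care — and the main obstacle — is the correspondence between which family of tile-lines is crossed and which pair of physical sides the collision lands on. A crossing of the horizontal lattice line $\{y = k\}$ in the unfolding must fold back to a collision with a \emph{horizontal} side ({\sf AB} or {\sf CD}) of the original table, and likewise vertical crossings fold to vertical sides; this is precisely the content of Proposition~\ref{prop:angles} together with the reflection bookkeeping of Definition~\ref{def:unfolding_orientation}, so I would invoke those results rather than re-derive them. I would also verify the boundary behaviour at the endpoint: since the segment arrives at the lattice point $(n, m)$, it terminates exactly at a vertex (not a side), so the endpoint is not counted among the collision points, which is consistent with Definition~\ref{def:square_gendiag} counting only non-vertex collisions. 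Finally, I would cross-check the formula against the worked examples: the length-three generalised diagonals of \fref{fig:gendiag}(a)--(d) arise from slopes $\frac{1}{4}, \frac{2}{3}, \frac{3}{2}, 4$, for which $m + n - 2 = 3$ in each case, and the length-one diagonals of \fref{fig:gendiag}(e)--(f) come from slope $\frac{1}{2}$ (or $\frac{2}{1}$), giving $m + n - 2 = 1$, in agreement with the statement.
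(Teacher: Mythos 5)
Your proof is correct and follows essentially the same route as the paper: unfold the trajectory to the segment from $(0,0)$ to $(n,m)$, use $\gcd(m,n)=1$ to rule out interior lattice points, and count the $n-1$ crossings of vertical grid lines and $m-1$ crossings of horizontal grid lines as collisions with the vertical and horizontal sides, respectively. Your additional care about crossings not coinciding with corners and the endpoint not being counted is a slightly more explicit version of what the paper leaves implicit, but it is the same argument.
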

\begin{proof}
The unfolded trajectory of such a generalised diagonal is the line through the origin with slope $\frac{m}{n}$ that passes through all vertex points $(i, j) \in \mathbb{Z} \times \mathbb{Z}$ that are integer multiples of $(n, m)$. There are no other points $(i, j) \in \mathbb{Z} \times \mathbb{Z}$ on this line, because $\gcd{(m, n)} = 1$. From $(0,0)$ to $(n, m)$, we encounter exactly $m-1$ vertical reflections and exactly $n-1$ horizontal reflections, which means that there are $m - 1$ collision points on the horizontal sides {\sf AB} or {\sf CD} and $n - 1$ collision points on the vertical sides {\sf BC} or {\sf DA}. The length of this singular trajectory is, thus, $m - 1 +  n - 1 = m + n - 2$, as required. 
\end{proof}
Observe that a generalised diagonal can never start and terminate at the same vertex. Ineed, if a generalised diagonal for the square $\billiard$ starts, for example, at vertex {\sf A} then it can only terminate at {\sf A} if the unfolded trajectory is a line that passes through one of the points $(i, j) \in \mathbb{Z} \times \mathbb{Z}$ with both $i$ and $j$ even; we may assume $\gcd{(i, j)} = 2$, because otherwise $(i, j)$ is not the first vertex at which the generalised diagonal terminates. The unfolded trajectory is then a line with slope $\frac{j}{i} = \frac{j/2}{i/2}$, which passes through the vertex point $(\frac{i}{2}, \frac{j}{2})$ before reaching {\sf A}. Since $\gcd{(i, j)} = 2$, either $\frac{i}{2}$ or $\frac{j}{2}$ or both will be odd, which means that the generalised diagonal already terminated in the vertex {\sf B}, {\sf D}, or {\sf C}, respectively. 
\begin{remark}
Proposition~\ref{prop:gendiag_length}, combined with Proposition~\ref{prop:eulertotient}, enables us to list all generalised diagonals of a given length that start at vertex {\sf A}. In total, there are twice the number of different generalised diagonals, because we consider a billiard trajectory equal to its reversed-direction copy. More precisely, there are exactly two generalised diagonals in each equivalence class, namely, a generalised diagonal that starts at vertex {\sf A} and terminates at one of the other vertices, and a reflected copy of this generalised diagonal that starts and terminates at the other (remaining) two vertices. 
\end{remark}

\Frefs{fig:equivalence}, \ref{fig:period10}, and \ref{fig:gendiag} suggest a relation between the equivalence class of a periodic orbit in the square billiard of a particular period and the types and lengths of the generalised diagonals that bound the families in this class. Indeed, we find that all families of periodic orbits of a given type are related to the same generalised diagonal. More precisely, we have the following result.
\begin{theorem}
\label{thm:gendiag}
Given $p, q \in \mathbb{N}$ such that $\gcd{(p, q)} = 2$, consider the billiard trajectory for the square billiard generated by a pair $\langle P_0, \alpha_0 \rangle$ with $\tan{(\alpha_0)} = \frac{p}{q}$ fixed and $P_0 \in [0, 1)$ varying. Then the billiard trajectory is one of the following:
\begin{displaymath}
  \left\{ \begin{array}{ll}
            \mbox{a singular orbit}, & \mbox{if } P_0 = \frac{2 \ell}{p} \mbox{ for } \ell = 0, 1, \ldots, \frac{p}{2} - 1, \\
            \mbox{a periodic orbit in } \mathcal{C}_{p+q}(p), & \mbox{otherwise},
          \end{array} \right.
\end{displaymath}
and the singular orbit lies on a generalised diagonal of length $\frac{p+q}{2}-2$ with precisely $\frac{p}{2} - 1$ collision points on the horizontal sides {\sf AB} or {\sf CD} and $\frac{q}{2} - 1$ collision points on the vertical sides {\sf BC} or {\sf DA}. 
\end{theorem}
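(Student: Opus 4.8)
The plan is to work entirely in the unfolded picture on $\mathbb{R}^2$, where the billiard trajectory generated by $\langle P_0, \alpha_0 \rangle$ with $\tan{(\alpha_0)} = \frac{p}{q}$ becomes the straight line through $(P_0, 0)$ with slope $\frac{p}{q}$. The whole theorem is a single dichotomy: the trajectory is singular exactly when the unfolded line passes through a lattice point $(i,j) \in \mathbb{Z} \times \mathbb{Z}$ (which corresponds to a vertex of some translated copy of the table), and it is periodic otherwise. So the core of the proof is to determine precisely for which values of $P_0$ the line hits a lattice point.

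First I would write the unfolded line as $y = \frac{p}{q}(x - P_0)$ and ask when it passes through some $(i,j) \in \mathbb{Z} \times \mathbb{Z}$. This forces $j q = p(i - P_0)$, i.e. $P_0 = i - \frac{jq}{p}$. Since $\gcd{(p, q)} = 2$, I would write $p = 2m$, $q = 2n$ with $\gcd{(m,n)} = 1$, so the condition becomes $P_0 = i - \frac{jn}{m}$. Reducing modulo $1$ and using $\gcd{(m,n)} = 1$ (so that $jn$ ranges over all residues mod $m$ as $j$ varies), the admissible values of $P_0$ in $[0,1)$ are exactly the fractions with denominator $m$, namely $P_0 \in \{ \frac{\ell}{m} : \ell = 0, 1, \ldots, m-1 \}$. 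Rewriting $\frac{\ell}{m} = \frac{2\ell}{2m} = \frac{2\ell}{p}$ with $\ell = 0, 1, \ldots, \frac{p}{2} - 1$ recovers exactly the singular values in the theorem statement, and everything else gives a periodic orbit. The periodic case then lands in $\mathcal{C}_{p+q}(p)$ directly by Theorem~\ref{thm:square_periodic} and Definition~\ref{def:POclass}, since the least period condition $\gcd{(p,q)} = 2$ guarantees the type is $(p,q)$.

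For the length and collision-point count of the singular orbit, I would invoke Corollary~\ref{cor:square_gendiag} and the surrounding discussion: every singular orbit is contained in a generalised diagonal running vertex-to-vertex. The segment of the unfolded line between the two consecutive vertices straddling $P_0$ is parallel to the line through $(0,0)$ and $(n, m) = (\frac{q}{2}, \frac{p}{2})$, and since $\gcd{(m,n)} = 1$ this segment contains no interior lattice points. I would then apply Proposition~\ref{prop:gendiag_length} with slope $\frac{m}{n}$ (noting $\gcd{(m,n)} = 1$): this yields length $m + n - 2 = \frac{p}{2} + \frac{q}{2} - 2 = \frac{p+q}{2} - 2$, with $m - 1 = \frac{p}{2} - 1$ collision points on the horizontal sides and $n - 1 = \frac{q}{2} - 1$ on the vertical sides, exactly as claimed.

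The main obstacle I anticipate is the bookkeeping in the singular case: one must confirm that for each admissible $P_0 = \frac{2\ell}{p}$ the trajectory genuinely terminates at a vertex (rather than merely that the unfolded line passes near one), and that the two vertices bounding the relevant segment are distinct — i.e. that the generalised diagonal does not start and end at the same vertex. The latter is precisely the content of the observation following Proposition~\ref{prop:gendiag_length}, so I would cite that remark to close the gap. A secondary subtlety is verifying that the count $\frac{p}{2}$ of distinct singular values $\ell = 0, \ldots, \frac{p}{2}-1$ is correct and non-redundant, which follows from the fact that the $m = \frac{p}{2}$ residues $\frac{\ell}{m}$ are distinct in $[0,1)$; I would make this explicit to ensure no singular value is double-counted and no periodic value is wrongly excluded.
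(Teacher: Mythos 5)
Your proposal is correct, and its overall architecture matches the paper's: unfold to a line of slope $\frac{p}{q}$, use rationality of the slope to rule out non-periodic behaviour so that the trichotomy collapses to ``meets a lattice point'' versus ``periodic of type $(p,q)$'', and read off the length and collision counts of the bounding generalised diagonal from Proposition~\ref{prop:gendiag_length} applied with $(m,n)=(\tfrac{p}{2},\tfrac{q}{2})$. Where you genuinely diverge is in the identification of the singular set of initial points. The paper first locates these values as the translates $P_0 = \tfrac{n}{m}\,j \pmod{1}$, for $j = 0,\ldots,m-1$, of the horizontal collision points of the generalised diagonal through {\sf A}, and then devotes the bulk of the proof to showing that this unordered set equals $\bigl\{ \tfrac{\ell}{m} \bigm| \ell = 0,\ldots,m-1 \bigr\}$ by interpreting the points as $m$th roots of unity uniformly distributed on the circle. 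You instead solve the incidence equation directly, obtaining $P_0 \equiv -\tfrac{jn}{m} \pmod{1}$, and invoke the fact that multiplication by $n$ permutes the residues modulo $m$ when $\gcd(m,n)=1$; this one-line congruence argument replaces the paper's roots-of-unity detour and is cleaner. The only thing the paper's longer route buys is the \emph{order} in which the singular values occur along the generalised diagonal (the rigid-rotation structure recorded in the remark following the theorem), which your residue argument does not see but which is not needed for the statement itself. The two subtleties you flag --- that the bounding vertices of the generalised diagonal are distinct, and that the $\tfrac{p}{2}$ singular values are pairwise distinct --- are exactly the right gaps to close, and both are handled as you propose.
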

\begin{proof}
Since $\tan{(\alpha_0)} = \frac{p}{q}$ is rational, this billiard trajectory cannot be a non-periodic orbit, because non-periodic orbits unfold to lines with irrational slopes; cf.\ Remark~\ref{rem:dichotomy}. If it is periodic, then it is of type $(p, q)$ and Definitions~\ref{def:POclass} and~\ref{def:square_least} imply that such a periodic orbit will be a member of the class $\mathcal{C}_K(p)$, with $K = p + q$. Furthermore, the billiard trajectory unfolds to a line (or line segment) with fixed slope $\frac{p}{q}$, so if it is singular, it must lie on a generalised diagonal with this slope. Since $\gcd{(p, q)} = 2$, Proposition~\ref{prop:gendiag_length} implies that the generalised diagonal will have length $\frac{p}{2} + \frac{q}{2} - 2$ as required, and it will, indeed, have $\frac{p}{2} - 1$ collision points on the horizontal sides {\sf AB} or {\sf CD} and the remaining $\frac{q}{2} - 1$ collision points on the vertical sides {\sf BC} or {\sf DA}. Hence, the proof is complete as soon as we show that a billiard trajectory generated by the pair $\langle P_0, \alpha_0 \rangle$ is singular if and only if $P_0 = \frac{2 \ell}{p}$ for $\ell = 0, 1, \ldots, \frac{p}{2} - 1$. 

Consider the unfolding of a generalised diagonal that starts at vertex {\sf A} with slope $\frac{m}{n} = \frac{p}{q}$. This generalised diagonal terminates at the point $(n, m) = (\frac{q}{2}, \frac{p}{2}) \in \mathbb{Z} \times \mathbb{Z}$, because $\gcd{(p, q)} = 2$, so $\frac{m}{n} = \frac{p}{q}$ and $\gcd{(m, n)} = 1$. The $\frac{p}{2} - 1$ collision points on the horizontal sides {\sf AB} or {\sf CD} are given by the $m - 1$ intersection points with the horizontal lines $\{ y = j \}$ for $j = 1, \ldots, m - 1$. If we translate this line down over $j$ integer units, and left over another, say, $i$ integer units for some $0 \leq i < n$, we can move any one of these intersection points to the segment $[0, 1]$ on the $x$-axis, which corresponds to the original side {\sf AB} of the square billard. The translated points still lie on a generalised diagonal, because both the start and terminal vertex points at $(0, 0)$ and $(n, m)$ map to $(-i, -j)$ and $(n - i,\, m - j)$, respectively, which are also vertex points. The corresponding values for $P_0$ are then given by the distances to the origin of the translated intersection points, or equivalently, the distances to the vertices $(i, j)$ for the intersection points on the lines $\{ y = j \}$, with $j = 1, \ldots, m - 1$. Note that there can be no other values for $P_0 \in [0, 1)$ that lead to singular orbits, because their unfolded trajectories must lie on generalised diagonals with slope $\frac{m}{n} = \frac{p}{q}$ that have at most $m - 1$ collision points on the side {\sf AB}; Hence, if we include $P_0 = 0$, there are only $m$ candidates. The intersection points with integer $y$-coordinates $y = j$ have $x$-coordinates $x = \frac{n}{m} \, j$, so the values for $P_0$ that lead to singular orbits are
\begin{displaymath}
  P_0 = \tfrac{n}{m} j \; \mbox{(mod 1)} \in [0, 1), \mbox{ for } j = 0, \ldots, m - 1.
\end{displaymath}
We claim that this set of points is the same as the set of points $P_0 = \frac{2 \ell}{p}$ for $\ell = 0, 1, \ldots, \frac{p}{2} - 1 = m - 1$. To see this, first note that the points $\frac{n}{m} (j - 1)$ and $\frac{n}{m} j$ differ by $\frac{n}{m}$ for all $j = 1, \ldots, m - 1$; we say that they are uniformly distributed on the interval $[0, n]$. It is important to realise that the first point is located at $0$. Imagine wrapping this interval $n$ times around a circle with circumference $1$. Then each point $\frac{n}{m} j$ will map to a point on this circle at an arclength from $0$ that cannot exceed $1$; each point $\frac{n}{m} j$ maps to a different point on the circle, because $\gcd{(m, n)} = 1$. We can view such points as angles $2 \pi \theta_j$, measured in radians, with $\theta_j \in [0, 1)$. The difference $\frac{n}{m}$ between two neighbouring points $\frac{n}{m} (j - 1)$ and $\frac{n}{m} j$ translates on the circle to the arclength distance $\frac{n}{m} \; \mbox{(mod 1)}$, or a rotation by angle $2 \pi \frac{n}{m} \; (\mbox{mod $2 \pi$)}$, but the points may not be direct neighbours any longer; in other words, the sequence of values $\theta_j$ for $j = 0, \ldots, m-1$ is not necessarily in increasing order. Next, observe that $m \, \tfrac{n}{m} j \in \mathbb{Z}$, so $m \, \tfrac{n}{m} j \; \mbox{(mod 1)} = 0$ for all $j = 0, \ldots m - 1$. In the complex plane, this means that each point $\frac{n}{m} j \; \mbox{(mod 1)}$ is a solution to the equation
\begin{displaymath}
  \left( e^{2 \pi i \theta} \right)^m = 1,
\end{displaymath}
which are determined by the $m$ roots of unity in the complex plane, and these are uniformly distributed on the unit circle. Since $e^0 = 1$, the roots are given by the angles $2 \pi \theta_\ell$, with $\theta_\ell = \frac{\ell}{m} = \frac{1}{m} \ell \in [0, 1)$, for $\ell = 0, \ldots, m - 1$ also uniformly distributed on the unit interval $[0, 1)$. Therefore, the (unordered) set $\left\{ \theta_j \frac{n}{m} j \; \mbox{(mod 1)} \bigm| j = 0, \ldots, m-1 \right\}$ is the same as the (ordered) set $\left\{ \theta_\ell = \frac{\ell}{m} \bigm| \ell = 0, \ldots, m-1 \right\}$, because both sets contain exactly $m$ uniformly distributed points in $[0, 1)$, starting from $\theta_0 = 0$.

For each $j = 1, \ldots, m - 1$, we obtain $\theta_j = \frac{n}{m} j \; \mbox{(mod 1)}$ from the ordered set $\left\{ \theta_\ell = \frac{\ell}{m} \bigm| \ell = 0, \right.$ $\left.1, \ldots, m-1 \right\}$ by taking the $n$th neighbour after the point $\theta_\ell$ that corresponds to $\theta_{j-1}$; here, we treat $\theta_0 = 0$ as a neighbour of $\theta_{m-1} = \frac{m - 1}{m}$. \end{proof}
\begin{remark}
The expert reader will recognise the sequence $\theta_j$ as a trajectory of the circle map on the unit interval defined by the rigid rotation $x \mapsto x + \omega$ with $\omega = \frac{m}{n}$ and winding number $n$. 
\end{remark}
%
\begin{figure}[t!]
  \centering
  \includegraphics{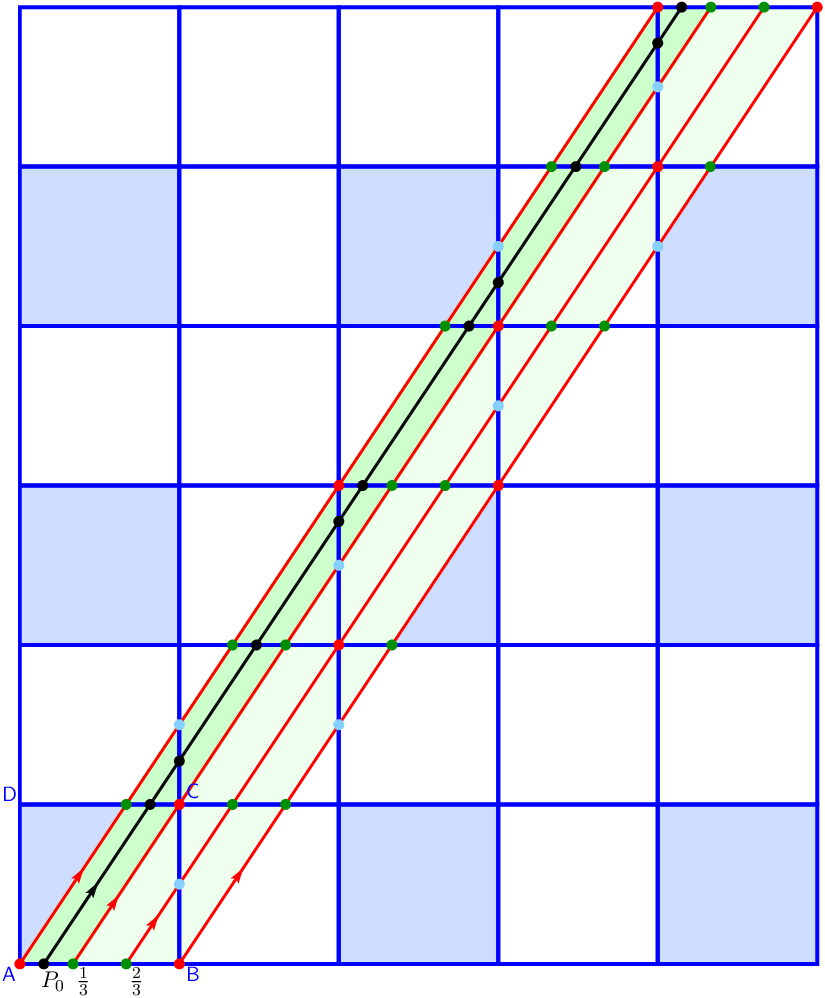}
  \caption{\label{fig:POfamilies} 
    Unfolding of all possible period-ten orbits of type $(6, 4)$, and associated bounding singular orbits that start on the side {\sf AB} of the square $\titlebilliard$. Shown are  the period-ten orbit (black) from \fref{fig:period10}(c) that starts at $P_0 = 0.15$ and the generalised diagonal (red) from \fref{fig:gendiag}(c) starting at vertex {\sf A}, together with three translated versions that give the two singular orbits starting at $\frac{1}{3}$, $\frac{2}{3}$ and the generalised diagonal starting at vertex {\sf B}. The dark (green) shaded strip represents the entire family in $\mathcal{C}_{10}(6)$, while the light (green) shaded strip indicates the regime of existence starting from any point on the side {\sf AB}; the (light-blue) shaded tiles correspond to tables with the orientation $\titlebilliard$.} 
\end{figure}
%
To illustrate Theorem~\ref{thm:gendiag}, consider the period-ten orbit from \fref{fig:period10}(c), which is a member of the class $\mathcal{C}_{10}(6)$ and generated by the pair $\langle 0.15,\, \atan{(\frac{3}{2})} \rangle$. Its corresponding generalised diagonal of length three is shown in \fref{fig:gendiag}(c). \Fref{fig:POfamilies} shows the unfoldings of these two billiard trajectories as two lines with slope $\frac{3}{2}$: the generalised diagonal is the (red) line starting at vertex {\sf A} at  the origin (red point), and the periodic orbit is the (black) line starting at the point $(0.15, 0)$ (black), respectively. Here, we used a (light-blue) shading for the tiles in $\mathbb{R}^2$ with orientation $\billiard$, instead of labelling each reflected vertex. The generalised diagonal has three collision points, which are consecutively located at the points $(\frac{2}{3}, 1)$ (green), $(1, \frac{3}{2})$ (light-blue), and $(\frac{4}{3}, 2)$ (green) in the plane. The first and last (green) points are collisions with the sides {\sf AB} or {\sf CD} and they determine the possible initial points $P_0$ on {\sf AB} that lead to a period-ten orbit when the initial slope of the billiard trajectory is $\frac{3}{2}$. The initial points that are excluded lie on the singular orbits (red lines) obtained by translation of the generalised diagonal through {\sf A} such that each of the two collision points $(\frac{2}{3}, 1)$, $(\frac{4}{3}, 2)$ are mapped to the interval $[0, 1)$ on the $x$-axis. For $(\frac{2}{3}, 1)$, this is achieved by starting the generalised diagonal from the vertex $(0, -1)$, which crosses the $x$-axis at $(\frac{2}{3}, 0)$, and for $(\frac{4}{3}, 2)$ the generalised diagonal should start from vertex point $(-1, -2)$ so that it crosses the $x$-axis at the point $(\frac{1}{3}, 0)$. Hence, all period-ten orbits in the class $\mathcal{C}_{10}(6)$ are generated by pairs $\langle P_0, \atan{(\frac{3}{2})} \rangle$ with $P_0 \in (0, \frac{1}{3})$, $P_0 \in (\frac{1}{3}, \frac{2}{3})$, or $P_0 \in (\frac{2}{3}, 1)$, while starting points $P_0 = \frac{1}{3}$ and $P_0 = \frac{2}{3}$ generate singular orbits with that angle. There are no other period-ten orbits that collide six times with the horizontal and four times with the vertical sides of the square $\billiard$.

Note that each period-ten orbit in the class $\mathcal{C}_{10}(6)$ has three collision points on the side {\sf AB}. Each such period-ten orbit is uniquely identified by the generator pair $\langle P_0,\, \atan{(\frac{3}{2})} \rangle$ with $P_0 \in (0, \frac{1}{3})$. Indeed, any period-ten orbit generated by the pair $\langle P,\, \atan{(\frac{3}{2})} \rangle$ with $P \in (\frac{2}{3}, 1)$ is exactly the same period-ten orbit as the one starting from $P_0 = P  - \frac{2}{3} \in (0, \frac{1}{3})$, which encounters $P \in (\frac{2}{3}, 1)$ as its sixth collision point; see \fref{fig:POfamilies}. Similarly, any period-ten orbit that starts from $P \in (\frac{1}{3}, \frac{2}{3})$ is the same as the one starting from $P_0 = \frac{2}{3} - P$, which encounters the point $P \in (\frac{1}{3}, \frac{2}{3})$ on {\sf AB} after three collisions; this is shown in \fref{fig:POfamilies} on an (unshaded) tile with the vertically reflected orientation $\billiard[BCDA]$. Hence, the family of period-ten orbits in the class $\mathcal{C}_{10}(6)$ is completely represented by an initial point $P_0$ up to distance $\frac{1}{3}$ from {\sf A} along the side {\sf AB} and the outgoing line with slope $\frac{3}{2}$.

\section{Rectangular billiards}
\label{sec:rectangle}
So far, we restricted attention to the square billiard, that is, a rectangular table with aspect ratio $1:1$. In this section, we extend our results to rectangular tables with arbitrary aspect ratios. More precisely, we consider a rectangular table represented by the scaled table $\billiard$ with sides {\sf AB} and {\sf CD} of length $1$ and sides {\sf BC} and {\sf DA} of length $\varrho$, where $0 < \varrho < \infty$. Recall that, just as for the square billiard, a constant scaling of the sides of the table does not change the number or types of periodic orbits. The question addressed in this section is whether a scaling of just the sides {\sf BC} and {\sf DA} changes the number or types of periodic orbits.
\begin{remark}
For the square table, it is straightforward to rotate the table and place any of the four vertices at the origin. For the rectangular table with aspect ratio $1:\varrho$, this hold for vertices {\sf A} and {\sf C} only; the quarter rotations that place either vertex {\sf B} or vertex {\sf D} at the origin result in a scaled table with aspect ratio $1:\frac{1}{\varrho}$. This is the reason why we made a distinction between periodic orbits of type $(p, q)$ and those of type $(q, p)$, or equivalently, between the equivalence classes $\mathcal{C}_K(p)$ and $\mathcal{C}_K(q) = \mathcal{C}_K(K - p)$. 
\end{remark}
The rectangular table is a natural and perhaps most straightforward extension from the square table, because it is the generic linear transformation of the square table that preserves the angles between all four sides. Consequently, the technique of unfolding a billiard trajectory to a straight line in the plane $\mathbb{R}^2$ can be applied in the same way as for the square billiard; essentially, the unfolding takes place relative to the vertices that now lie on the transformed mesh $\mathbb{Z} \times \varrho \, \mathbb{Z}$, rather than the original square mesh $\mathbb{Z} \times \mathbb{Z}$. This also means that there exists a one-to-one correspondence between billiard trajectories on the square billiard and those on the rectangular billiard. More precisely, any billiard trajectory on the square, be it singular, periodic, or non-periodic, can be unfolded and then mapped via the appropriate linear transformation to a straight line on the transformed mesh $\mathbb{Z} \times \varrho \, \mathbb{Z}$ that again corresponds to a singular, periodic, or non-periodic orbit on the rectangular billiard, respectively. Here, the slope of the unfolded trajectory will change (by a factor $\varrho$), but the line is positioned the same way relative to the vertices. 

\begin{figure}[t!]
  \centering
  \includegraphics{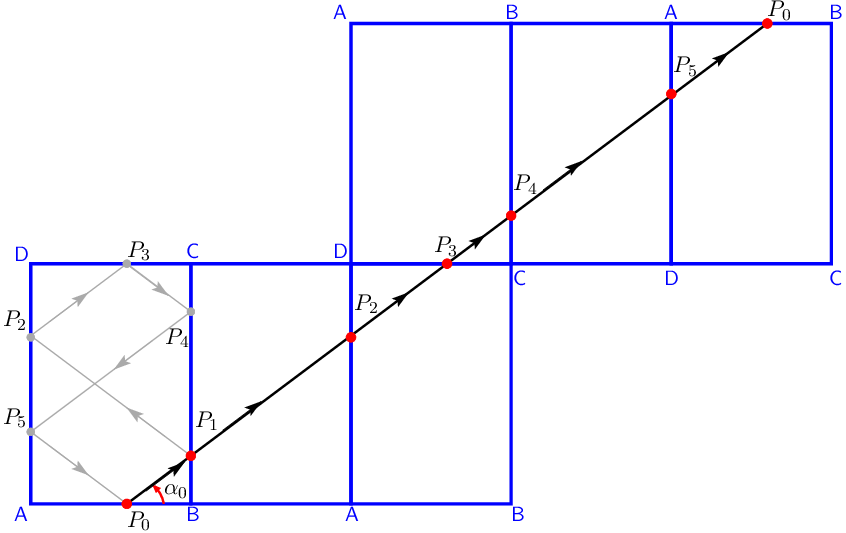}
  \caption{\label{fig:rec_periodicity} 
    Unfolding of a period-six orbit of type $(2,4)$ in the rectangular billiard $\titlebilliard$ with aspect ratio $1:\frac{3}{2}$; compare with \frefs{fig:equivalence}(c) and~\ref{fig:square_unfolding}.} 
\end{figure}
%
As an example, compare the period-six orbit from \frefs{fig:equivalence}(c) and~\ref{fig:square_unfolding} with the period-six orbit shown in \fref{fig:rec_periodicity} that lies in the rectangle $\billiard$ with aspect ratio $1:\varrho$, where $\varrho = \frac{3}{2}$. As in \fref{fig:square_unfolding}, the trajectory has been unfolded on the plane as well. Observe that the sides {\sf BC} and {\sf DA} of the rectangular billiard are $1.5$ times longer than its sides {\sf AB} and {\sf CD}. Hence, the line that connects the point $P_0 = (0.7, 0)$ with its translated copy after a total of two vertical and four horizontal reflections has slope $\frac{3}{4} = \varrho \frac{2}{4} = \varrho \frac{1}{2}$; hence, the slope of the unfolded trajectory on the rectangular table changes by a factor $\varrho$ compared to the slope $\frac{1}{2}$ for the line in \fref{fig:square_unfolding} associated with the square table. 

Other than the adjustment by this factor $\varrho$, the results for the square billiard presented in the previous section carry over in a natural way to the rectangular billiard with aspect ratio $1:\varrho$. We briefly summarise these extended results here; note that the characterisation includes the square billiard as the case $\varrho = 1$.

\subsection{Classification of periodic orbits for the rectangular billiard}
The rectangular billiard has the same equivalence classes of periodic orbits, which can be constructed in the same way after adjusting the slope as required for the given aspect ratio. In particular, all periodic orbits for the rectangular billiard have even period, and the product of the slope of the trajectory and aspect ratio of the rectangle is rational; see also Theorem~\ref{thm:square_periodic}. More precisely, we have the following extended version of Theorem~\ref{thm:gendiag}.
\begin{proposition}
\label{prop:rectangle_periodic}
For the rectangular billiard $\billiard$ with aspect ratio $1:\varrho$, there exists a periodic orbit that has $p$ distinct collision points on the sides {\sf AB} or {\sf CD} and $q$ distinct collision points on the sides {\sf BC} or {\sf DA} for any $p, q \in \mathbb{N}$ with $\gcd{(p, q)} = 2$. This periodic orbit is from the equivalence class ${\cal C}_K(p)$ with $K = p + q$, and it is generated by a pair $\langle P_0, \alpha_0 \rangle$, with $\alpha_0 \in (0, \frac{\pi}{2}]$ such that $\tan{(\alpha_0)} = \varrho \, \frac{p}{q}$. The point $P_0$ can take almost any value in the interval $[0, 1)$; the exceptions given by 
\begin{displaymath}
            P_0 = \frac{2 \ell}{p} \mbox{ for } \ell = 0, 1, \ldots, \frac{p}{2} - 1, \\
\end{displaymath}
lead to a singular orbit, which lies on a generalised diagonal of length $\frac{p+q}{2}-2$ with precisely $\frac{p}{2} - 1$ collision points on the horizontal sides {\sf AB} or {\sf CD} and $\frac{q}{2} - 1$ collision points on the vertical sides {\sf BC} or {\sf DA}. 
\end{proposition}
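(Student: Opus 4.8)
The plan is to reduce Proposition~\ref{prop:rectangle_periodic} entirely to the already-proven Theorem~\ref{thm:gendiag} by exploiting the linear correspondence between square and rectangular billiards that was established in the discussion preceding this statement. First I would fix the aspect ratio $1:\varrho$ and recall that unfolding the rectangular billiard produces straight lines relative to the transformed mesh $\mathbb{Z} \times \varrho\, \mathbb{Z}$. The key observation is that the linear map $L : (x, y) \mapsto (x, \varrho\, y)$ sends the square mesh $\mathbb{Z} \times \mathbb{Z}$ to this transformed mesh, carries vertices to vertices, and preserves all horizontal and vertical reflections. Consequently, a line of slope $\frac{p}{q}$ through the point $(P_0, 0)$ in the square unfolding maps to a line of slope $\varrho\, \frac{p}{q}$ through the \emph{same} point $(P_0, 0)$ in the rectangular unfolding, since $L$ fixes the $x$-axis pointwise.

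Next I would invoke this correspondence to transfer each conclusion of Theorem~\ref{thm:gendiag} directly. Because $L$ preserves the combinatorial structure of reflections and maps vertex points to vertex points bijectively, a billiard trajectory on the square is singular (respectively periodic of type $(p, q)$) if and only if its image under $L$ is singular (respectively periodic of type $(p, q)$) on the rectangle; the number of collisions on the horizontal sides {\sf AB} or {\sf CD} and on the vertical sides {\sf BC} or {\sf DA} are both invariant under $L$, since $L$ maps horizontal sides to horizontal sides and vertical sides to vertical sides. Given $p, q \in \mathbb{N}$ with $\gcd{(p, q)} = 2$, Theorem~\ref{thm:gendiag} supplies the existence of a period-$(p+q)$ orbit in $\mathcal{C}_{p+q}(p)$ on the square generated by $\langle P_0, \atan{(\frac{p}{q})} \rangle$, together with the exact set of singular values $P_0 = \frac{2\ell}{p}$ and the length and collision-distribution of the associated generalised diagonal. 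Transferring via $L$ yields the same orbit on the rectangle, now generated by $\langle P_0, \alpha_0 \rangle$ with $\tan{(\alpha_0)} = \varrho\, \frac{p}{q}$, as claimed.

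The one point requiring genuine care is the invariance of the critical values $P_0 = \frac{2\ell}{p}$. The singular values of $P_0$ are determined purely by where the unfolded generalised diagonal crosses the $x$-axis at integer translates, and these crossing points lie \emph{on} the $x$-axis, which $L$ fixes pointwise. Hence I would argue that the set of singular starting points on the side {\sf AB}, as well as the length $\frac{p+q}{2} - 2$ of the generalised diagonal and its distribution of $\frac{p}{2} - 1$ horizontal and $\frac{q}{2} - 1$ vertical collision points, all coincide with the square case verbatim, since these quantities are counted from the reflection combinatorics which $L$ leaves unchanged. I expect this invariance-under-$L$ of the $x$-axis data to be the main obstacle to state cleanly, since one must verify that the collision points on {\sf AB} in the rectangular unfolding occur at precisely the same $x$-coordinates as in the square unfolding even though the slope has been scaled by $\varrho$; but this follows immediately once one notes that $L$ acts as the identity on the $x$-axis and maps each tile of a given orientation to the corresponding tile of the same orientation.
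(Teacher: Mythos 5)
Your proposal is correct and follows essentially the same route as the paper: the paper does not give a formal proof of this proposition but justifies it by the discussion immediately preceding it, namely the one-to-one correspondence between square and rectangular trajectories obtained by unfolding onto the transformed mesh $\mathbb{Z} \times \varrho\,\mathbb{Z}$, which scales the slope by $\varrho$ while leaving the position of the line relative to the vertices (and hence the critical values of $P_0$ and all collision counts) unchanged. Your explicit map $L(x,y) = (x, \varrho\, y)$ and the observation that it fixes the $x$-axis pointwise simply make the paper's informal transfer argument precise.
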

Given $K = 2 N$ for some $N \in \mathbb{N}$, we can construct period-$K$ orbits of type $(p, q)$ for any $p, q \in \mathbb{N}$ such that $K = p + q$ and $\gcd{(p, q)} = 2$. Hence, just as for the square billiard we can apply Proposition~\ref{prop:eulertotient} which means that Corollary~\ref{cor:square_unique_angles} also holds for the rectangular billiard: there are $\varphi(N)$ unique combinations for $p$ and $q$, leading to $\varphi(N)$ different types of period-$K$ orbits, generated by $\varphi(N)$ different angles in the interval $(0, \frac{\pi}{2}]$, which represent a total of $\varphi(N)$ different equivalence classes.

It is important to realise that any rectangle will have periodic orbits of all (even) periods, including the periodic orbits with period $K = 2$, which are not covered in Proposition~\ref{prop:rectangle_periodic}. The period-two orbits are kind of limiting cases: if we take $p=2$ and $q=0$ then $\tan{(\alpha_0)} = \varrho \, \frac{p}{q}$ is not defined, but the (vertical) line with infinite slope, that is the angle $\alpha_0 = \frac{\pi}{2}$ produces a period-two orbit from ${\cal C}_2(2)$ for all rectangles, and for any initial point $P_0 \in (0, 1)$; similarly, with $p=0$ and $q=2$, we get $\alpha_0 = 0$, which is not allowed, but does lead to period-two orbits from ${\cal C}_2(0)$ that only collide with the vertical sides {\sf BC} and {\sf DA}. We generate such a period-two orbit by choosing the initial point $P_0$ anywhere on the side {\sf BC}, except the vertices {\sf B} and {\sf C} themselves.

We also remark that the initial angle $\alpha_0$ is only unique for period-two orbits; its value will vary for other equivalence classes, because of the dependence on the aspect ratio of the billiard table. The converse is also true: for example, the angle $\alpha_0$ with $\tan{(\alpha_0)} = \frac{1}{2}$ generates a period-six orbit for the square billiard table, but this will be a period-ten orbit on the rectangle with aspect ratio $1:2$, while it is a period-four orbit on the rectangle with aspect ratio $1:\frac{1}{2}$, or equivalently, by starting a billiard trajectory with this slope on a vertical side of length $2$. Similarly, this same angle produces periodic orbits of periods ten or 22 on a rectangle with aspect ratio $1:\frac{3}{4}$ when starting from a horizontal or a vertical side, respectively; the period-ten orbit is of type $(4, 6)$, in contrast to the period-ten orbit produced for this angle on a table with aspect ratio $1:2$, which will be of type $(2, 8)$. 

Proposition~\ref{prop:square_gendiag} and Corollary~\ref{cor:square_gendiag} hold for rectangular billiards as well. More precisely, we have the following properties. 
\begin{proposition}
For the rectangular billiard $\billiard$ with aspect ratio $1:\varrho$, consider a billiard trajectory that starts at vertex {\sf A}, {\sf B}, {\sf C}, or {\sf D}. Define $\sigma \in \mathbb{R}$ as the slope of the billiard trajectory measured relative to the side {\sf AB}.
\begin{itemize}
\item if $\frac{\sigma}{\varrho}$ is irrational, then this billiard trajectory never collides with another vertex. \\[1mm]
\item if $\frac{\sigma}{\varrho}$ is rational, then this billiard trajectory will always end up in a vertex, that is, it is a singular orbit and the trajectory generated in the opposite direction, using the reflected (negative) slope, will also be singular. 
\end{itemize}
\end{proposition}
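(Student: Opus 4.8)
The plan is to reduce this statement to the already-proven results for the square billiard by exploiting the linear transformation between the square and the rectangle. The paper has established (in the discussion preceding this proposition and in Remark~\ref{rem:unfolding}) that unfolding on the rectangular billiard takes place relative to the transformed mesh $\mathbb{Z} \times \varrho \, \mathbb{Z}$, and that there is a one-to-one correspondence between billiard trajectories on the square and those on the rectangle, obtained by the linear map $(x, y) \mapsto (x, \varrho \, y)$. Under this map, a straight line of slope $s$ on the square mesh $\mathbb{Z} \times \mathbb{Z}$ becomes a straight line of slope $\varrho \, s$ on the rectangular mesh; equivalently, a trajectory of slope $\sigma$ on the rectangle corresponds to a trajectory of slope $\frac{\sigma}{\varrho}$ on the square. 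The quantity $\frac{\sigma}{\varrho}$ is therefore precisely the slope of the associated unfolded trajectory on the square billiard.

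First I would fix a trajectory starting at a vertex of the rectangle with slope $\sigma$ relative to {\sf AB}, and pass to the corresponding trajectory on the square via the inverse transformation. Since vertices of the rectangle map exactly to vertices of the square (both lattices have integer bottom-left corners after rescaling), the rectangular trajectory terminates at a vertex if and only if its square counterpart does. The square counterpart starts at a vertex and has slope $\frac{\sigma}{\varrho}$.

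For the first bullet, I would apply Proposition~\ref{prop:square_gendiag}: if $\frac{\sigma}{\varrho}$ is irrational, the square trajectory starting at a vertex never terminates at a vertex, and hence neither does the rectangular one. For the second bullet, I would invoke Proposition~\ref{prop:gendiag_length} (together with the surrounding discussion establishing existence of generalised diagonals): if $\frac{\sigma}{\varrho} = \frac{m}{n}$ is rational with $\gcd{(m,n)} = 1$, then the unfolded line of slope $\frac{m}{n}$ through the origin passes through the lattice point $(n, m)$, so the square trajectory is a generalised diagonal terminating at a vertex; transforming back shows the rectangular trajectory is singular. The claim about the reverse direction follows exactly as in Corollary~\ref{cor:square_gendiag}: both the forward trajectory (slope $\sigma$) and its reverse (slope $-\sigma$) unfold to the same line in $\mathbb{R}^2$ up to reflection, and $\frac{-\sigma}{\varrho}$ is rational whenever $\frac{\sigma}{\varrho}$ is, so singularity is preserved.

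The main obstacle, though minor, is bookkeeping about which rotations are permissible on the rectangle. Unlike the square, a quarter-turn of the rectangle changes the aspect ratio from $1:\varrho$ to $1:\frac{1}{\varrho}$, so I must be careful that the reduction to Proposition~\ref{prop:square_gendiag} respects the correct orientation and that the slope conversion factor $\varrho$ is applied consistently regardless of which of the four vertices the trajectory departs from. I expect this to be handled cleanly by noting that rationality of $\frac{\sigma}{\varrho}$ is invariant under the slope inversion $\frac{\sigma}{\varrho} \mapsto \frac{\varrho}{\sigma}$ induced by a quarter-turn (the reciprocal of a rational is rational, and the reciprocal of an irrational is irrational), so the dichotomy is insensitive to the choice of starting vertex.
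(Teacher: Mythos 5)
Your reduction to the square via the linear map $(x, y) \mapsto (x, \varrho\, y)$ --- so that the unfolded trajectory of slope $\sigma$ on the mesh $\mathbb{Z} \times \varrho\,\mathbb{Z}$ becomes a line of slope $\frac{\sigma}{\varrho}$ on $\mathbb{Z} \times \mathbb{Z}$, to which Propositions~\ref{prop:square_gendiag} and~\ref{prop:gendiag_length} and the reversal argument of Corollary~\ref{cor:square_gendiag} apply --- is correct and is precisely the correspondence the paper itself relies on when it asserts this proposition without a separate proof. Your handling of the quarter-turn bookkeeping is also sound, since rotating sends $(\sigma, \varrho)$ to $(\frac{1}{\sigma}, \frac{1}{\varrho})$ and hence $\frac{\sigma}{\varrho}$ to its reciprocal, which preserves (ir)rationality.
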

Note that it is necessary to specify the side with respect to which the slope is measured; if the slope is $\sigma$ when measured with respect to the side {\sf AB} then it is also $\sigma$ when measured with respect to the side {\sf CD}, but it will be $\frac{1}{\sigma}$ when measured with respect to the sides {\sf BC} or {\sf DA}. For the square table, this difference does not matter, because $\frac{1}{\sigma}$ is (ir)rational if and only if $\sigma$ is (ir)rational. However, for the rectangular billiard, the adjustment by the factor $\varrho$ does not necessarily preserve this equivalence if $\varrho$ is itself irrational. More precisely, if the slope is $\sigma$ when measured with respect to, say, the side {\sf BC} then we effectively consider the rotated table with aspect ratio $1:\frac{1}{\varrho}$, and the behaviour of the billiard trajectory starting at vertex {\sf B} will be determined by whether $\frac{1/\sigma}{\varrho} = \frac{1}{\varrho \, \sigma}$ is irrational or not; this is not equivalent to asking whether $\frac{\sigma}{\varrho}$ is irrational or not.

\section{Discussion}
\label{sec:conclusion}
We completely classified the existence and nature of all periodic orbits for a rectangular billiard table with aspect ratio $1:\varrho$, where $0 < \varrho < \infty$; we discussed the square billiard table in full detail, which is included in this setting as the case with $\varrho = 1$. The class of rectangular billiars is special, because it is the only class of polygonal billards for which the classification of all periodic orbits is preserved under a non-uniform scaling~\cite{davispreprint, duchinpreprint}. The special property that the sides of a rectangular table are perpendicular to each other enables us to characterise the periodic orbits for this class of billard tables in unprecedented detail. In particular, we show that rectangular billiards admit period-$K$ orbits for any even period, but not for any odd period $K \in \mathbb{N}$. As soon as $K \geq 6$, there are different types of period-$K$ orbits, determined by the difference between the numbers of collisions with the two pairs of parallel sides. Each type is completely determined by the slope that the periodic orbit should have with respect to one of the sides of the rectangle. Furthermore, the periodic orbit can be realised with any initial point on this side, except for a finite set of exactly $\frac{1}{2}K - 1$ points, other than the table corners, that give rise to singular orbits; the unfolding of such singular orbits leads to generalised diagonals in the plane. We defined equivalence classes for each type and proved that the total of different types of period-$K$ orbits, for any $K \in 2 \, \mathbb{N}$ is given by Euler's totient function $\varphi(N)$ evaluated at $N = \frac{1}{2}K$; the totients $n \in \{ 1, \ldots, N \}$ such that $\gcd{(n, N)} = 1$ define pairs $(m, n)$ with $m = N - n$ that uniquely define the slopes needed to generate a periodic orbit of a specific type.

As mentioned, the difference between the square and rectangular billiard is a stretching of one pair of parallel sides, which leads to an adjustment by the aspect-ratio parameter $\varrho$ of the slopes required to generate particular periodic orbits. It is tempting to apply a more general transformation, for example, one that scales and shears the sides, such that the billiard table becomes a parallelogram. Unfortunately, the unfolding technique generally fails to tile the plane for parallelogram billiards~\cite{FoxKershner1936, KatokZemlyakov1975, sergebook}. We find that there is very little known regarding parallelogram billiards. Do parallelograms also only admit periodic orbits with even periods? What are necessary and sufficient conditions for existence of a period-$K$ orbit and how do we construct such a periodic orbit for the parallelogram?

The non-existence of periodic orbits with odd periods has been proven for parallelograms consisting of two equilateral triangles glued together, that is, one angle is $\frac{\pi}{3}$ radians~\cite{60_deg_par}. Furthermore, it is known that periodic orbits for the parallelogram with angles $\frac{\pi}{4}$ and $\frac{3\pi}{4}$ do not persist under perturbations, that is, so-called stable periodic orbits do not exist for this parallelogram table~\cite{rozikovbook}. More generally, it is known that polygonal billiard tables with angles equal to rational multiples of $\pi$ have many periodic trajectories~\cite{singular_traj_ref, demarco2011, masur2002, sergebook}. However, almost nothing is known for billiard tables with irrational angles. For example, as we already alluded to in the introduction, it is an open question whether periodic orbits exist for triangular billiard tables with arbitrary (irrational) angles. The (rational) angles for acute triangles that lead to a \emph{lattice polygon}, that is, triangles that unfold to a planar tiling, have all been listed~\cite{smillie2002, puchta2001}. Hooper~\cite{Hooper2007} has shown that periodic orbits for right-triangle billiards are never stable. In contrast, if one perturbs the right triangle such that it has an obtuse angle, then it will always have a stable periodic orbit~\cite{schwartz2006, schwartz2009}; the proof is restricted to obtuse angles that do not exceed $\frac{5 \pi}{9}$. For larger angles, we are only aware of existence results of periodic orbits for isosceles triangles~\cite{HooperSchwartz2009}. We believe that progress can be made from the study of periodic orbits in a parallelogram table, which we view as continuous deformations from corresponding periodic orbits in the rectangular table. Results in this direction are left for future work.

\section*{Acknowledgements}
HHC is grateful to Sean Gasiorek for many fruitful discussions regarding billiards. HMO thanks Moon Duchin for her directions to the more recent literature on the topic and Sergey Tabachnikov for his comments on an earlier draft of the manuscript.



\end{document}